\theoremstyle{plain}
\newtheorem{thm}{Theorem}[section] 
\newtheorem{lem}[thm]{Lemma} 
\newtheorem{cor}[thm]{Corollary}
\newtheorem{prop}[thm]{Proposition}
\theoremstyle{definition}
\newtheorem{exmp}{Example}[section] 
\newtheorem{rem}{Remark} 
\title{Singular factors of rational plane curves}
\author[Laurent Bus\'e]{Laurent Bus\'e}
\address{INRIA Sophia Antipolis - M\'editerran\'ee.
           2004 route des Lucioles, B.P. 93
           06902 Sophia Antipolis, France}
\email{Laurent.Buse@inria.fr}
\urladdr{\tt http://www-sop.inria.fr/members/Laurent.Buse/}
\author[Carlos D'Andrea]{Carlos D'Andrea}
\address{Departament d'{\`A}lgebra i Geometria, Universitat de Barcelona.
Gran Via 585, 08007 Barcelona, Spain}
\email{cdandrea@ub.edu}
\urladdr{\tt http://atlas.mat.ub.es/personals/dandrea}
\date{\today}
\subjclass[2010]{Primary 14Q05; Secondary 13P15,68W30.}
\thanks{Both authors were partially supported by the bilateral (French-Spanish) collaboration PAI Picasso HF 2006--0220 . The second author was also partially supported
by the research project MTM2007--67493 (Spain).}
\def\PP{{\mathbb{P}}}
\def\AA{{\mathbb{A}}}
\def\CC{{\mathbb{C}}}
\def\N{{\mathbb{N}}}
\def\Cc{{\mathcal{C}}}
\def\Res{{\mathrm{Res}}}
\def\SRes{{\mathrm{SRes}}}
\def\zf{{\mathfrak{z}}}
\def\F{{\mathfrak{F}}}
\def\pp{{\mathfrak{p}}}
\def\coker{{\mathrm{coker}}}
\def\Sylv{{\mathrm{Sylv}}}
\def\val{{\mathrm{val}}}
\begin{document}
\begin{abstract}
We give a complete factorization of the invariant factors of resultant matrices built from birational parameterizations of rational plane curves in terms of the 
singular points of the curve and their multiplicity graph. This allows us to prove the validity of some conjectures about these invariants stated by Chen, 
Wang and Liu. As a byproduct, we also give a complete factorization of the $D$-resultant for rational functions in terms of the similar data extracted from the multiplicities.
\end{abstract}

\keywords{Rational plane curves, rational parameterizations, $\mu$-bases, $D$-resultants, subresultants, invariant factors.}
\maketitle
\section{Introduction}\label{1}
Curves in Computer Aided Geometric Design and in Visualization are often given in  parametric form. Their singularities are usually points where the shape of the graphic gets more complicated. Hence, 
understanding the nature and character of these singular points has been an active area of
research in the last years, see for instance \cite{CS01,Par02,SG07,PD07,Chen08,Bus09,JG09} and the references therein.

In this article, we will focus on parametric plane curves defined over the complex numbers, although our results are valid for any field of characteristic zero, 
and the computational aspects can be performed also on any  field containing the coefficients of the input polynomials. 
Let $a,b,c\in\CC[s,v]$ be homogeneous polynomials of the same degree 
$n\geq 3$  with $\gcd(a,b,c)=1$, such that the map
\begin{equation}\label{eq:phi}
\begin{array}{cccc}
\phi:&\PP_\CC^1 &\rightarrow &\PP^2_\CC \\
&(s_0:v_0)& \mapsto &(a(s_0,v_0):b(s_0,v_0):c(s_0,v_0))	
\end{array}
\end{equation}
parameterizes a plane rational algebraic curve $\Cc$ birationally onto its image (which is equivalent to say that the degree of $\Cc$ is $n$).  As it was shown by Abhyankar in \cite{Abh90} for $c=v^n$, and  later in general by Sendra and Winkler in \cite{SW01}, Gutierrez, Rubio and Yie  in \cite{GRY02}, and P\'erez-Diaz in \cite{PD07} among others, the parameterization $\phi$ can be used to detect the singular points of $\Cc$, which are those  $P\in\Cc$ such that their multiplicity $m_P(\Cc)$ is strictly greater than $1$. As $\phi$ is generically one-to-one, $m_P(\Cc)$ is actually the number of points in the preimage of $\phi^{-1}(P)$ counted with multiplicities (for a proper ``parameterization-free'' definition of $m_P(\Cc)$ as well as its properties, see \cite{Abh90,Wal50}). This explains why from a computational point of view,  the
parameterization $\phi$ provides a lot of information about the singularities of $\Cc$. The purpose
of this paper is to shed some light in this area. 
\par\smallskip
We will use the notation and definitions given in \cite{BK86} (see also \cite{Sta00}).
Let $\{P_1,\ldots,P_r\}$ be the proper singular points of $\Cc$, and for all $i=1,\ldots,r$ denote by 
\begin{itemize}
	\item $\zf^i_j$, $j\in I_i$, the irreducible branch-curves of $\Cc$ at $P_i$,
	\item $(t_{i,j}:u_{i,j})$, $j\in I_i$, the point of $\PP^1_\CC$ such that $\zf^i_j(t_{i,j}:u_{i,j})=P_i$,
	\item $(P^i_{j,h})_{0\leq h}$ the neighboring point sequence of $\zf^i_j$ at $P_i$,
	\item $(m^i_{j,h})_{0\leq h}$ the multiplicity sequence of $\zf^i_j$ at $P_i$,
	\item $(\sim_h)_{o\leq h}$ the equivalence relations of the multiplicity graph of $\Cc$.
\end{itemize} 
For a virtual point $P^i_{j,h}$ of $\Cc$, we define its multiplicity as
$m_{P^i_{j,h}}(\Cc):=\sum_{j'\sim_h j} m^i_{j',h}$.
Set
\begin{equation}\label{FG}
\begin{array}{ccc}
F(s,v;t,u)&:=&a(s,v)c(t,u)-a(t,u)c(s,v)\\
G(s,v;t,u)&:=&b(s,v)c(t,u)-b(t,u)c(s,v),
\end{array}
\end{equation}
and let
$\Res_{(s,v)}(-,-)$ be the Sylvester resultant operator which eliminates the homogeneous variables $s$
and $v$ . If $c=v^n$, then it is shown in \cite{Abh90} that there exists $0\neq \gamma \in \CC$  such that
\begin{equation}\label{tayloresultant}
\Res_{(s,v)}\left(\frac{F(s,v;t,u)}{su-tv},\frac{G(s,v;t,u)}{su-tv}\right) = \gamma \prod_{\substack{i=1,\ldots,r \\ j \in I_i}} (u_{i,j}t-t_{i,j}u)^{\epsilon_{i,j}}
\end{equation}
where for all $i=1,\ldots,r$ and $j\in I_i$
\begin{equation*}\epsilon_{i,j} = \sum_{h\geq 0} m^i_{j,h}(m_{P_{j,h}^h}(\Cc)-1).
\end{equation*}
From now on we will most of the time omit the nonzero constants. Hence, all identities involving polynomials should be understood up to a nonzero $\gamma\in\CC.$
\par
Let $B_{F,G}(t,u)\in\CC[t,u]^{n\times n}$ be the square B\'ezout matrix built from
$F(s,v;t,u),$ $\,G(s,v,t;u)$ regarded as polynomials in the variables $(s,v)$  (see Section \ref{5} for its precise definition and construction). Clearly, $B_{F,G}(t,u)$ does not have
maximal rank as $su-tv$ is a common factor of both $F$ and $G$. In \cite{CS01}, Chiohn and Sederberg showed that by analyzing the maximal minors of this matrix,  one can obtain all the singular points of $\Cc$ in a very direct way.
This approach was improved and refined by Chen, Wang and Liu in \cite{Chen08}, where 
(\ref{FG}) is replaced with
 \begin{equation}\label{pq}
\begin{array}{ccc}
	p_\phi(s,v;t,u) &= &p_1(s,v)a(t,u)+p_2(s,v)b(t,u)+p_3(s,v)c(t,u) \\ 
	q_\phi(s,v;t,u) &= &q_1(s,v)a(t,u)+q_2(s,v)b(t,u)+q_3(s,v)c(t,u), 
\end{array}
\end{equation}
with $\{p,q\}:=\{(p_1,p_2,p_3),\,(q_1,q_2,q_3)\}$ being a basis of the free $\CC[s,v]$-module of syzygies of $(a,b,c)$. Suppose w.l.o.g. that $\deg(p)\leq\deg(q)$ and set
$\mu:=\deg(p)$. We then easily have $\mu\leq n-\mu=\deg(q)$. In the Computer Aided Geometric Design community, the set $\{p,q\}$ is called a \text{$\mu$-basis} of the parameterization $\phi$.
\par\smallskip
Let now $B_{p_\phi,q_\phi}(t,u)\in\CC[t,u]^{(n-\mu)\times(n-\mu)}$ be the  hybrid B\'ezout matrix associated to $p_\phi(s,v;t,u),\,q_\phi(s,v;t,u)$ (for a definition of hybrid B\'ezout matrices, se also Section \ref{5}).
It is shown in \cite{Chen08} that by computing the invariant factors of this matrix, one gets some kind of stratification of the singularities of $\Cc$ 
with respect to their multiplicities (Theorem
$4$ in \cite{Chen08}). This stratification is well understood when all the singularities of $\Cc$ are
\textit{ordinary} (i.e. when there are no virtual points $P^i_{j,h}$  with $h>0$) and one can
get an explicit description of the invariant factors of this matrix in terms of the singular points of $\Cc$ and their multiplicities (Theorems $5$ and $6$ in \cite{Chen08}). 
\par
In the case where $\Cc$ has  singularities that are not ordinary, Chen, Wang and Liu stated a couple of conjectures
(Conjectures $1$ and $2$ in \cite{Chen08}) relating the invariant factors with the multiplicity of the virtual
points appearing in the process of desingularization of the curve. The main result of this paper is
a complete factorization of the singular factors of $B_{p_\phi,q_\phi}(t,u)$ and as a consequence a complete proof and clarification of both conjectures. 
\par
To be more precise, let  $S_{p_\phi,q_\phi}(t,u)$ be the Sylvester matrix of (\ref{pq}). It is simply the square $(n\times n)$-matrix of the $\CC[t,u]$-linear map 
\begin{eqnarray}\label{eq:sylvestermatrix}
	\CC[t,u]\otimes_\CC \CC[s,v]_{n-\mu-1} \oplus \CC[t,u]\otimes_\CC \CC[s,v]_{\mu-1} & \rightarrow & \CC[t,u]\otimes_\CC \CC[s,v]_{n-1} \\ \nonumber
	 \alpha \oplus \beta  & \mapsto & \alpha p_\phi+ \beta q_\phi
\end{eqnarray} 
in the canonical monomial bases (the notation $\CC[s,v]_{d}$, $d\in \mathbb{N}$, stands for the $\CC$-vector space of homogeneous polynomials of degree $d$ in $\CC[s,v]$).
 A collection of homogeneous polynomials $d_1(t,u),d_2(t,u),\ldots,$ $d_n(t,u)$ in $\CC[t,u]$ such that, for $i=1,\ldots,n$ the product 
$$d_n(t,u)^{n-i+1}d_{n-1}(t,u)^{n-i}\cdots d_{i+1}(t,u)^2d_{i}(t,u) \in \CC[t,u]$$ 
is equal to the greatest common divisor of the $(n+1-i)$-minors of $S_{p_\phi,q_\phi}(t,u)$, is called a collection of \emph{singular factors} of the parameterization $\phi$. The existence of these polynomials is guaranteed by  homogenizing with some care the invariant factors of $S_{p_\phi,q_\phi}(t,1)$.

The terminology of \emph{singular factors} is taken from \cite{Chen08}. Note also that if $a,b,c\in k[s,v]$ with $k$ a subfield of $\CC$, then the singular factors will have their coefficients in $k$. This observation is of importance for computational purposes.

\medskip

Now we are ready to present the main result of this paper. 

\begin{thm}\label{thm:main} With the notation established above, we have 
	$$d_{n-\mu+1}(t,u)=\cdots=d_n(t,u)=1,$$ 
	and for  $k=2,\ldots,n-\mu$  
	$$  d_k(t,u)= \prod_{i=1,\ldots,r, \ j \in I_i} (u_{i,j}t-t_{i,j}u)^{\epsilon^k_{i,j}}$$
	where
	$$\epsilon^k_{i,j}=\sum_{h \text{ \rm such that } m_{P^i_{j,h}}(\Cc)=k} m^i_{j,h}$$
\end{thm}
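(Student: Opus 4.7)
The strategy is to reduce the theorem to a Smith normal form computation of $S_{p_\phi,q_\phi}$ localized at each linear form $\ell = u_{i,j}t - t_{i,j}u$ associated to a preimage of a singular point, and then extract the invariant factor contributions from the desingularization chain of each branch. If the nonzero diagonal entries of the Smith form over $R_\ell = \CC[t,u]_{(\ell)}$ have $\ell$-adic valuations $0 \leq f_1 \leq \cdots \leq f_{n-1}$ (the last diagonal entry vanishes because $su-tv$ divides both $p_\phi$ and $q_\phi$, forcing $\det S_{p_\phi,q_\phi} = 0$), then unwinding the defining recursion of singular factors yields $v_\ell(d_k) = f_{n+1-k}-f_{n-k}$ for $k = 2, \ldots, n$ (with $f_0 = 0$). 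Only the linear forms $\ell_{i,j}$ contribute nontrivially to $d_k$ for $k \geq 2$, so the analysis localizes branch-by-branch.

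For the vanishing $d_{n-\mu+1} = \cdots = d_n = 1$, I would show that $S_{p_\phi,q_\phi}(t_0,u_0)$ has rank at least $\mu$ at every $(t_0:u_0) \in \PP^1_\CC$. This matrix represents the map $(\alpha,\beta) \mapsto \alpha p_\phi(s,v;t_0,u_0) + \beta q_\phi(s,v;t_0,u_0)$, with rank $n - e$ where $e = \deg_{(s,v)} \gcd\bigl(p_\phi(s,v;t_0,u_0), q_\phi(s,v;t_0,u_0)\bigr)$. A Puiseux expansion argument exploiting that $(p_1,p_2,p_3)$ and $(q_1,q_2,q_3)$ are syzygies shows that the common roots of the specialized pair are exactly the preimages of $\phi(t_0:u_0)$, each appearing with vanishing order equal to the corresponding branch multiplicity $m^i_{j',0}$; combined with the minimality of the $\mu$-basis, this gives $e = m_{\phi(t_0:u_0)}(\Cc)$. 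The classical bound $m_P(\Cc) \leq n-\mu$ then produces rank $\geq \mu$, so the gcd of $\mu$-minors of $S_{p_\phi,q_\phi}$ is a nonzero constant, and the defining recursion yields $d_{n-\mu+1} = \cdots = d_n = 1$.

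For the main formula at $k = 2, \ldots, n-\mu$, I localize at each preimage $(t_{i,j}:u_{i,j})$ and analyze the higher-order contributions through the desingularization chain $P^i_{j,0}, P^i_{j,1}, \ldots$. Expanding $\phi$ in a Puiseux series along the branch $\zf^i_j$ and tracking the vanishing order of $p_\phi(s,v;t,u)$ and $q_\phi(s,v;t,u)$ modulo successively higher powers of $\ell$, each infinitely near point $P^i_{j,h}$ with virtual multiplicity $m_{P^i_{j,h}}(\Cc)=k$ contributes $m^i_{j,h}$ to an invariant factor valuation sitting at level $k-1$. The translation $v_\ell(d_k) = f_{n+1-k} - f_{n-k}$ then gives $v_\ell(d_k) = \sum_{h \,:\, m_{P^i_{j,h}}(\Cc) = k} m^i_{j,h} = \epsilon^k_{i,j}$. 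A consistency check: $\sum_{k \geq 2}(k-1)\, v_\ell(d_k) = \epsilon_{i,j}$ matches the $\ell$-adic valuation of Abhyankar's Taylor resultant in \eqref{tayloresultant}.

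The main obstacle is in this last step: showing that each infinitely near point contributes \emph{cleanly} to a single invariant factor at the correct multiplicity level, without cross-contamination that would scramble the partition. This requires an inductive argument along the desingularization chain --- each blow-up separates branches previously sharing a neighboring point and refines the Smith normal form into finer invariant factors --- with the equivalence relations $\sim_h$ providing the combinatorial bookkeeping of which branches cluster together. The minimality of the $\mu$-basis presentation is essential to prevent invariant factors from bleeding into adjacent multiplicity levels, and subresultant theory offers polynomial-valued refinements of the Sylvester data that should make this tracking explicit.
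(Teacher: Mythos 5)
Your high-level framework is the same as the paper's: reduce to a localized Smith normal form analysis, handle the vanishing $d_{n-\mu+1}=\cdots=d_n=1$ via the rank bound coming from Proposition~\ref{prop:inversionformula} and Remark~\ref{rmm}, and attack the remaining $d_k$'s by induction along the desingularization chain. The translation $v_\ell(d_k)=f_{n+1-k}-f_{n-k}$ is also correct given the conventions in Section~\ref{1}.

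However, there is a genuine gap precisely where you flag ``the main obstacle'': you correctly identify that the inductive step must rule out cross-contamination between invariant factors at different multiplicity levels, but you do not supply a mechanism that actually does so --- ``Puiseux expansion and tracking vanishing orders'' is a restatement of the desired conclusion rather than an argument for it. The paper's induction closes because of three interlocking ingredients that are absent from your sketch. First, one must \emph{explicitly} produce a $\mu$-basis of the once-blown-up curve $\tilde{\Cc}$ from that of $\Cc$; in suitably general coordinates one gets $\tilde p = \tilde a^h x_2 - \tilde b^h x_3$, $\tilde q = c\,x_1 - a\,x_3$, where $a=h\tilde a$, $b=h\tilde b$ and $h=H_P$. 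Second, after localizing at a root $t_i$ over the blown-up point and using the Ap\'ery--Jouanolou identification of the Sylvester cokernel with a multiplication map (Lemma~\ref{lem:jou}), the localized Sylvester matrix \emph{factors} as a product $M_h \cdot M_{\tilde b(s)c(t)\tilde a(t)-\tilde a(s)c(t)\tilde b(t)}$, and the invariant factors of the $M_h$ piece are computed explicitly ($1,\dots,1,a(t,1),\dots,a(t,1)$, contributing exactly $\nu_i$ to each of the last $m$ slots). Third --- and this is the step your ``consistency check'' gestures at but does not actually use --- Thompson's theorem (Theorem~\ref{thompson}) applied to this product only yields \emph{divisibility} relations $(t-t_1)^{\nu_1}\tilde d_m\cdots\tilde d_{m-i+1}\mid d_m\cdots d_{m-i+1}$, introducing unknown non-negative slack exponents $\epsilon_i$. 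It is the independently known valuation of $\F_1(M)=(\Delta)$ from Theorem~\ref{thm:conductor}, compared against the inductive hypothesis on $\tilde{\Cc}$, that forces $\sum\epsilon_i=0$ and hence every $\epsilon_i=0$. Without Thompson's inequality and this numerical pinch, the induction does not close, which is exactly the cross-contamination problem you acknowledge you have not resolved.

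So: right scaffolding, same skeleton as the paper, but the proposal is missing the three load-bearing technical ideas (explicit blow-up of the $\mu$-basis, matrix factorization via the Ap\'ery--Jouanolou lemma, and Thompson's inequality combined with the $\Delta$-degree count) that turn the strategy into a proof.
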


We will see how this theorem implies Conjectures $1$ and $2$ in \cite{Chen08} in Section \ref{2} and prove it in Section \ref{4}.
We also point out that the factorization of invariant factors of matrices related to this problem is also considered in \cite{JG09}.

\par
The reader may have already noticed that we just claimed above that the conjectures posted in \cite{Chen08}  where made over the matrix 
 $B_{p_\phi,q_\phi}(t,u)$ instead of $S_{p_\phi,q_\phi}(t,u)$. We will show in Section \ref{5} that the cokernels of these two matrices plus a whole family of hybrid resultant matrices are isomorphic, thus Theorem \ref{thm:main} can also be formulated over the invariant factors of any of them. Our results in Section \ref{5} can be regarded as an extension of those shown already by Ap\'ery and Jouanolou in \cite[Proposition 18]{AJ06}.

\par
We will also see in Section \ref{6} that there is an explicit connection between $B_{p_\phi,q_\phi}(t,u)$ and $B_{F,G}(t,u)$, which will allow us to get a complete description of the invariant factors of the latter.
As a direct consequence of this, we get a complete factorization into irreducible factors of 
$D$-resultants. These are a natural generalization of Abhyankar's formula (\ref{tayloresultant}) for $c=v^n$. Indeed, in \cite{GRY02}, it is shown that  if we take
\begin{equation}\label{eqq}
\tilde{\Delta}(t,u):=\Res_{(s,v)}\left(\frac{F(s,v;t,u)}{su-tv},\frac{G(s,v;t,u)}{su-tv}\right),
\end{equation}
for a general rational parameterization, it turns out that if $\phi(t_0:u_0)$ is a singularity of $\Cc$, then $\tilde{\Delta}(t_0,u_0)=0,$
but there may be other roots coming from curves being parameterized by  permutations of $(a,b,c)$ see \cite[Theorem $3.1$]{GRY02}, and there was no known analogue of a factorization like (\ref{tayloresultant}) for $\tilde{\Delta}(t,u)$.
\par In \cite{Bus09}, the first author shows that by replacing (\ref{FG}) with (\ref{pq}) in the definition of $\tilde{\Delta}(t,u)$, one gets a polynomial $\Delta(t,u)$ which factorizes like (\ref{tayloresultant}). We will review the properties of $\Delta(t,u)$ in Section \ref{2}.
\par However, there was still missing a complete factorization of $\tilde{\Delta}(t,u)$. In Theorems \ref{thm:dres}  and \ref{thm:diffd}  we  give   a precise description of the factors of the $D$-resultant, completing the information given in
\cite[Theorem $3.1$]{GRY02}.

\smallskip
Understanding the algebraic structure of these matrices may lead to new algorithms for studying the geometry of singular points of rational curves. We will see for instance in Example \ref{example} that in some non trivial cases one can reconstruct the whole multiplicity graph of $\Cc$ from the invariant factors of these matrices, with operations only over the ground field of the parameterization. From a symbolic point of view, this problem has already been studied in \cite{SW91, Sta00,PD07}.

\subsection*{Organization of the paper.}
In Section \ref{2} we introduce some basic definitions and results, and also show how  Theorem \ref{thm:main} implies Conjectures $1$ and $2$ in \cite{Chen08}.
In Section \ref{3} we prove the main theorem for  curves having only ordinary singularities. The proof of the general case is given in Section
\ref{4}. 
Section \ref{5} is devoted to show that any resultant matrix can be used in Theorem \ref{5}. In Section \ref{6}, we describe all the invariant factors of $B_{F,G}(t,u)$ and show the complete factorization of $D$-resultants.

\subsection*{Acknowledgements.}
We would like to thank Jos\'e Ignacio Burgos, Eduardo Casas-Alvero and Teresa Cortadellas for very interesting comments, suggestions and clarifications on topics about singularities of curves and commutative algebra.  
The second author would also like to thank the Fields Institute in Toronto, where part of this work was done.

\section{Preliminary results and the singular factors conjectures}\label{2}
Throughout this paper, we will work over the field of complex numbers $\CC$. However, it should be noted that all the statements and proofs work over any algebraically closed field of characteristic zero.
We recall here again that  every identity involving polynomials should be understood up to a nonzero constant.
\par
Let $(x_1:x_2:x_3)$ be the homogeneous coordinates of $\PP^2_\CC$. By Hilbert-Burch's theorem, the first syzygy module of the sequence $a(s,v),b(s,v),c(s,v)$ is a free $\CC[s,v]$-module of rank 2. Moreover, if $\mu$ denotes the smallest degree of a nonzero syzygy, then this syzygy module is generated in degrees $\mu$ and $n-\mu$. A $\mu$-basis of the parameterization $\phi$ is then a choice of a basis of this syzygy module. Identifying any syzygy $(g_1,g_2,g_3)$ with the bi-homogeneous form $g_1x_1+g_2x_2+g_3x_3$, a $\mu$-basis  corresponds to a couple of bi-homogeneous forms  
$p,q \in \CC[s,v]\otimes_\CC \CC[x_1,x_2,x_3]$ of bi-degree $(\mu,1)$ and $(n-\mu,1)$ respectively, such that $1\leq \mu\leq n-\mu$. 

\medskip

As $\Cc$ is a rational projective curve, we have that  its number of singular points, counted properly, is given by the well known \emph{genus formula}:
$$ (n-1)(n-2)=\sum_{P\in \mathrm{Sing}(\Cc)} m_P(\Cc)(m_P(\Cc)-1)$$
where $\mathrm{Sing}(\Cc)$ stands for the singular points, proper as well as infinitely near, of the curve $\Cc$ and $m_P(\Cc)$ stands for the multiplicity of the singular point $P$ on $\Cc$. Notice that 
in our case we know that there exists at least one (proper) singular point on $\Cc$, since $n\geq 3$.

\medskip
It is a well  known fact that $\Res_{(s,v)}(p,q) \in \CC[x_1,x_2,x_3]$  is an implicit equation of the curve $\Cc$, meaning that it is an irreducible and homogeneous degree $n$ polynomial whose zero locus is exactly the curve $\Cc$ (recall that the parametrization $\phi$ is assumed to be birational onto $\Cc$). Another interesting property is the following (see also \cite[Lemma 2]{Chen08}):

\begin{prop}\label{prop:inversionformula} Let $Q=(\alpha_1:\alpha_2:\alpha_3)$ be a point in $\PP_\CC^2$ and denote by $H_{Q}(s,v)$ a greatest common divisor of the two forms $\sum_{i=1}^3\alpha_ip_i(s,v)$ and $\sum_{i=1}^3\alpha_i q_i(s,v)$ in $\CC[s,v]$. Then $H_{Q}(s,v)$ is a homogeneous polynomial of degree $m_Q(\Cc)$ and if $m_Q(\Cc) \geq 1$ we have
	$$ H_Q(s,v)=\prod_{i=1}^{N}(v_is-s_iv)^{m_i}$$
where $N$ is the number of irreducible branch-curves of $\Cc$ centered at $Q$ and $m_i$ denotes the multiplicity of $Q$ with respect to the irreducible branch-curve $\zf$ such that $\zf(s_i:v_i)=Q$.
\end{prop}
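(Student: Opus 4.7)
The plan is to sandwich $H_Q$ between an ideal cutting out the scheme-theoretic preimage $\phi^{-1}(Q)$ (from above) and the local branch multiplicities (from below), and to show that both bounds agree with $\prod_{i=1}^N (v_i s - s_i v)^{m_i}$. The main tool is the Hilbert--Burch theorem: since $\{p, q\}$ is a $\mu$-basis of $(a,b,c)$, the $2 \times 2$ minors of the $3 \times 2$ matrix with columns $(p_1,p_2,p_3)$ and $(q_1,q_2,q_3)$ recover $(a, b, c)$ up to signs, namely $a = p_2 q_3 - p_3 q_2$, $b = -p_1 q_3 + p_3 q_1$, and $c = p_1 q_2 - p_2 q_1$.

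For the upper bound, I would consider $K_Q := (\alpha_2 a - \alpha_1 b,\, \alpha_3 a - \alpha_1 c,\, \alpha_3 b - \alpha_2 c) \subseteq \CC[s,v]$, the pullback via $\phi$ of the homogeneous ideal of $Q$ in $\PP^2_\CC$. Substituting the Hilbert--Burch formulas together with $\alpha_1 p_1 + \alpha_2 p_2 = p_Q - \alpha_3 p_3$ (and the analogue for $q$), a direct computation yields, for instance, $\alpha_2 a - \alpha_1 b = q_3 p_Q - p_3 q_Q$, and analogous identities for the other two generators; hence $K_Q \subseteq (p_Q, q_Q)$. Since $\phi$ is birational onto $\Cc$ and factors through the normalisation of $\Cc$, the saturation of $K_Q$ with respect to $(s,v)$ is the principal ideal generated by $\prod_{i=1}^N (v_i s - s_i v)^{m_i}$, of degree $m_Q(\Cc)$. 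Therefore $H_Q$ divides $\prod_{i=1}^N (v_i s - s_i v)^{m_i}$, so in particular $\deg H_Q \leq m_Q(\Cc)$.

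For the lower bound I would reduce to $Q = (0:0:1)$ by a linear change of coordinates on $\PP^2_\CC$ (which transports the $\mu$-basis to another $\mu$-basis), so that $p_Q = p_3$ and $q_Q = q_3$. At any preimage $(s_i, v_i)$ one has $c(s_i, v_i) \neq 0$, so the syzygy $p_1 a + p_2 b + p_3 c = 0$ yields, in the local ring at $(s_i, v_i)$, the identity $p_3 = -(p_1 a + p_2 b)/c$. The branch multiplicity $m_i$ equals $\min(\mathrm{ord}_{(s_i,v_i)} a,\, \mathrm{ord}_{(s_i,v_i)} b)$---the intersection multiplicity at $Q$ of $\zf_i$ with a generic line through $Q$---so $\mathrm{ord}_{(s_i,v_i)} p_3 \geq m_i$, and the same estimate holds for $q_3$. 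Consequently $\mathrm{ord}_{(s_i,v_i)} H_Q \geq m_i$ at every preimage.

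Combining the two bounds gives
$$ m_Q(\Cc) \;=\; \sum_{i=1}^N m_i \;\leq\; \sum_{i=1}^N \mathrm{ord}_{(s_i,v_i)} H_Q \;\leq\; \deg H_Q \;\leq\; m_Q(\Cc),$$
which forces equality throughout and, together with $H_Q \mid \prod_{i=1}^N (v_i s - s_i v)^{m_i}$, yields the claimed factorisation. The step requiring most care is the identification of the saturation of $K_Q$ with $\prod_{i=1}^N (v_i s - s_i v)^{m_i}$: this rests on the classical fact that under the normalisation $\phi\colon \PP^1_\CC \to \Cc$ the scheme-theoretic length of $\phi^{-1}(Q)$ at each $(s_i:v_i)$ equals the multiplicity $m_i$ of the corresponding branch, so that the total length of $\phi^{-1}(Q)$ is $m_Q(\Cc)$. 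Once this classical input from the local theory of plane curves is in place, the rest is Hilbert--Burch bookkeeping plus the routine local order estimate above.
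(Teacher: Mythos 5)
Your argument is correct, and it rests on the same two pillars as the paper's proof: the Hilbert--Burch presentation of $(a,b,c)$ by the $\mu$-basis, and the syzygy relation $p_1a+p_2b+p_3c=0$. The organization is genuinely a bit different, though. The paper reduces to $Q=(0:0:1)$ once and for all and proves the crisp algebraic identity $\gcd(p_3,q_3)=\gcd(a,b)$ --- one inclusion from Hilbert--Burch (divisors of $p_3,q_3$ divide the $2\times 2$ minors $a,b$), the other from the syzygy together with $\gcd(a,b,c)=1$ --- and then invokes the branch-multiplicity interpretation of $\gcd(a,b)$. You instead prove a divisibility $H_Q\mid\prod_i(v_is-s_iv)^{m_i}$ via Hilbert--Burch (the nice coordinate-free identity $\alpha_2a-\alpha_1b=q_3p_Q-p_3q_Q$ and its siblings, giving $K_Q\subseteq(p_Q,q_Q)$ and hence $K_Q\subseteq(H_Q)$, then passing to saturations), and you close the gap by a degree count: the syzygy gives $\mathrm{ord}_{(s_i,v_i)}H_Q\geq m_i$ at each preimage, forcing $\deg H_Q\geq m_Q(\Cc)$. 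The net effect is the same, and both arguments ultimately delegate the same classical input --- that the scheme-theoretic fibre of the normalization $\phi$ over $Q$ has length $m_i$ at each $(s_i:v_i)$, equivalently $\gcd(a,b)=\prod(v_is-s_iv)^{m_i}$ in the $Q=(0:0:1)$ frame --- to the local theory of plane curve branches; you make this explicit as a statement about $\mathrm{sat}(K_Q)$, whereas the paper simply says ``follows from the definition of the multiplicity.'' What the paper's route buys is that the exact identity $\gcd(p_3,q_3)=\gcd(a,b)$ is slightly stronger and shorter; what your route buys is that the Hilbert--Burch half is written invariantly in $Q$ and the ``classical fact'' you need is packaged cleanly as a statement about fibre lengths of the normalization. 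One small remark on precision: your claim that $m_i=\min(\mathrm{ord}_{(s_i,v_i)}a,\mathrm{ord}_{(s_i,v_i)}b)$ is correct, but the parenthetical gloss ``the intersection multiplicity with a generic line'' is mildly misleading since $x_1=0$ and $x_2=0$ are two specific lines; the reason the min is still $m_i$ is that a branch has a unique tangent direction, so at most one of the two coordinate lines can be tangent to $\zf_i$. This does not affect the validity of the lower bound, which in any case only needs $m_i\leq\min(\mathrm{ord}\,a,\mathrm{ord}\,b)$.
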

\begin{proof} By a linear change of coordinates in $\PP^2_\CC$, one can assume that $Q=(0:0:1)$, because $\mu$-bases have the expected property under linear changes of coordinates. By Hilbert Burch's theorem we have that, up to a constant,
	$$a(s,v)=\left| 
	\begin{array}{cc}
		p_2(s,v) & p_3(s,v) \\
		q_2(s,v) & q_3(s,v)
	\end{array}
	\right|
	\textrm{ and }
	b(s,v)=\left| 
	\begin{array}{cc}
		p_3(s,v) & p_1(s,v) \\
		q_3(s,v) & q_1(s,v)
	\end{array}
	\right|.
$$	
If $h(s,v)$ is a divisor of $p_3(s,v)$ and $q_3(s,v)$, then it follows that $h(s,v)$ divides both $a(s,v)$ and $b(s,v)$. Reciprocally, suppose that $h(s,v)$ is a nontrivial divisor of $a(s,v)$ and $b(s,v)$. Then, as
$$p_1(s,v)a(s,v)+p_2(s,v)b(s,v)+p_3(s,v)c(s,v)=0$$
and $c(s,v)$ does not share any nontrivial common factor with neither $a(s,v)$ nor $b(s,v)$, then $h(s,v)$ must divide $p_3(s,v)$. The same argument works for $q_3(s,v)$ and we deduce that $\gcd(p_3(s,v),q_3(s,v))=\gcd(a(s,v),b(s,v))$. From here, the claimed equality follows from the definition of the multiplicity of a point on an irreducible branch-curve. 
\end{proof}
\begin{rem}\label{rmm}
As an easy consequence of Proposition \ref{prop:inversionformula}, we have that if $Q$ is a proper singular point on $\Cc$ then either $2\leq m_Q(\Cc)\leq \mu$ or $m_Q(\Cc)= n-\mu$, a fact that has already been noticed in \cite{SG07}.
\end{rem}

Proposition \ref{prop:inversionformula} shows that a $\mu$-basis of $\phi$ provides nontrivial information on the proper singularities of $\Cc$. It turns out that it also carries informations on the infinitely near singularities of $\Cc$. 
Recall that we have the following property:
$$m_{P^i_{j,h}}(\Cc)=\sum_{j'\sim_h j} m^i_{j',h} \geq m^i_{j,h}.$$
Also, denote with $\SRes(p,q) \in \CC[x_1,x_2,x_3]$ the first principal subresultant of $p$ and $q$ with respect to the couple of homogeneous variables $(s,v)$. It is simply a certain minor of the Sylvester matrix of $p$ and $q$ with respect to $(s,v)$ (see e.g.~\cite{AJ06,Elk05,Bus09} for a precise definition).
The following result is a slight refinement of  \cite[Section 4]{Bus09}.

\begin{thm}\label{thm:conductor} We have 
	\begin{equation}\label{eq:delta}
		\Delta(t,u)=\SRes(p,q)(a(t,u),b(t,u),c(t,u)) = \gamma \prod_{\substack{i=1,\ldots,r \\ j \in I_i}} (u_{i,j}t-t_{i,j}u)^{\epsilon_{i,j}}
	\end{equation}
where $0\neq \gamma \in \CC$ and for all $i=1,\ldots,r$ and $j\in I_i$
\begin{equation*}\epsilon_{i,j} = \sum_{h\geq 0} m^i_{j,h}(m_{P_{j,h}^h}(\Cc)-1).
\end{equation*}
In particular, 
$$\deg(\Delta(t,u))= (\deg(\Cc)-1)(\deg(\Cc)-2)=\sum_{P\in \mathrm{Sing}(\Cc)} m_P(\Cc)(m_P(\Cc)-1).$$
\end{thm}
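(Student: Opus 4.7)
The plan is to locate the zero locus of $\Delta(t,u)$ via Proposition \ref{prop:inversionformula}, compute its local multiplicities by a branch-by-branch analysis, and verify that these account for the full degree dictated by the genus formula.

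First, I would show that $\Delta$ vanishes only at the preimages $(t_{i,j}:u_{i,j})$ of singular points. For any $(t_0:u_0)$ whose image $\phi(t_0:u_0)=Q$ is a smooth point of $\Cc$, Proposition \ref{prop:inversionformula} gives $H_Q(s,v)=u_0s-t_0v$, so the specializations of $p$ and $q$ at $(a(t_0,u_0),b(t_0,u_0),c(t_0,u_0))$ share a $\gcd$ of degree exactly $1$. Consequently the first principal subresultant coefficient in $(s,v)$ is nonzero at this specialization, and $\Delta(t_0,u_0)\neq 0$. Hence $\Delta(t,u)$ is supported on the finite set $\{(t_{i,j}:u_{i,j})\}$.

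Second, I would determine the order of vanishing of $\Delta$ at each preimage $(t_{i,j}:u_{i,j})$. In the ordinary case the multiplicity sequence of the branch $\zf^i_j$ is trivial beyond level $0$, Proposition \ref{prop:inversionformula} identifies the factor $(v_{i,j}s-t_{i,j}v)^{m^i_{j,0}}$ of $H_{P_i}(s,v)$ coming from $\zf^i_j$, and a direct computation (essentially carried out in \cite{Bus09}) shows that the order of vanishing of $\SRes(p,q)$ at $(t_{i,j}:u_{i,j})$ along this branch is exactly $m^i_{j,0}(m_{P_i}(\Cc)-1)$, which matches $\epsilon_{i,j}$ since $m_{P^i_{j,0}}(\Cc)=m_{P_i}(\Cc)$ in this case. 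For a non-ordinary branch I would argue by induction on the length of the multiplicity sequence. After blowing up $P_i$, the branch $\zf^i_j$ lifts to a branch through $P^i_{j,1}$ with shifted multiplicity sequence, and the proper transform of $\phi$ parameterizes (a portion of) the strict transform; applying the ordinary case and the inductive hypothesis to this new situation produces an additional contribution $m^i_{j,h}(m_{P^i_{j,h}}(\Cc)-1)$ at each level $h\geq 1$. The main obstacle will be to track exactly how the local contribution of the first subresultant transforms under each blow-up; this bookkeeping is precisely the refinement of \cite[Section~4]{Bus09} mentioned in the statement.

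Finally, summing the local orders and regrouping branches according to the equivalence relations $\sim_h$ yields
$$\sum_{i=1,\ldots,r,\, j\in I_i}\epsilon_{i,j}\;=\;\sum_{P\in\mathrm{Sing}(\Cc)} m_P(\Cc)\bigl(m_P(\Cc)-1\bigr),$$
which equals $(n-1)(n-2)$ by the genus formula recalled above. On the other hand, the bidegree of the principal subresultant $\SRes(p,q)$ in $(x_1,x_2,x_3)$, combined with $\deg a=\deg b=\deg c=n$, gives the same total $(t,u)$-degree for $\Delta$; comparing degrees shows that the product of local factors produced above accounts for all of $\Delta$, and no spurious factors can occur.
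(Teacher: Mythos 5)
Your plan has the right overall shape---locate the zero locus of $\Delta$ using Proposition~\ref{prop:inversionformula}, compute local orders branch by branch, and confirm the total against the genus formula---but it contains a genuine gap at exactly the step you flag as ``the main obstacle.'' The assertion that the order of vanishing of $\SRes(p,q)$ along the branch $\zf^i_j$ equals \emph{exactly} $m^i_{j,0}(m_{P_i}(\Cc)-1)$ in the ordinary case, and the subsequent induction producing the additional contribution $m^i_{j,h}(m_{P^i_{j,h}}(\Cc)-1)$ at each level $h\geq 1$, is the entire content of the theorem and is not actually established. The inductive step is where this bites: what is applied to the curve is a quadratic (Cremona) transformation, not a bare blow-up, and under this operation the degree of $\Cc$, the degree of the parameterization, and the $\mu$-basis all change. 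The phrase ``the proper transform of $\phi$ parameterizes (a portion of) the strict transform'' elides the need to construct an explicit $\mu$-basis of the transformed curve, localize, and track how the first subresultant transforms; that bookkeeping is precisely what Section~\ref{4} of this paper has to work through (for the stronger Theorem~\ref{thm:main}), and it requires setting up a careful normalization (conditions (i)--(vii) there) plus Thompson's inequality to control the invariant factors.

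The paper's own proof sidesteps this direct computation entirely. It cites \cite[Proposition 4.6]{Bus09}, which gives branch-level \emph{inequalities}---the local valuation of $\SRes(p,q)$ is at least $\epsilon_{i,j}$---and then observes that the degree count in the proof of \cite[Theorem 4.8]{Bus09} forces these inequalities to be equalities. This is a squeeze: lower bounds at every branch plus a matching total degree leaves no room for slack. Your proposal could be repaired in the same spirit: if the blow-up induction is only asked to deliver the lower bound $\geq \epsilon_{i,j}$ (which is realistic without the full bookkeeping), then your final degree comparison $\deg\Delta=(n-1)(n-2)=\sum\epsilon_{i,j}$ closes the argument. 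As written, however, you claim exactness at each branch and relegate the degree count to a corroborating check, which leaves the weight of the proof on a computation that is not carried out and is, in fact, the whole point.
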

\begin{proof} Although not stated explicitly under this form, this theorem follows from the results contained in \cite[Section 4]{Bus09}. Indeed, the proof of Theorem 4.8 in loc.~cit.~shows that all the inequalities given in Proposition 4.6, always in loc.~cit., are actually equalities for the first principal subresultant of $p$ and $q$. Since these equalities occur at the level of irreducible branch-curves, they imply the above theorem which requires the use of the multiplicity graph of $\Cc$. In fact, the properties (P1), (P2) and (P3) in loc.~cit.~are consequences of this theorem but they constitute the finer result one can state without introducing the multiplicity graph of $\Cc$. 
\end{proof}

Notice that Theorem \ref{thm:main} can be regarded as a refinement of the above theorem, since it provides non trivial factors  of $\eqref{eq:delta}$  that are in relation with the multiplicities of the singular points of the curve $\Cc$.

\medskip
Let us now show  how Theorem \ref{thm:main} implies the two conjectures stated in \cite{Chen08}.
For each proper singular point $P_i \in \Cc$, $i=1,\ldots,r$, we have
$$H_{P_i}(t,u)=\prod_{j\in I_i} (u_{i,j}t-t_{i,j}u)^{m_{P_i}(\zf_j^i)}=\prod_{j\in I_i} (u_{i,j}t-t_{i,j}u)^{m^i_{j,0}}.$$
For all $2\leq k \leq n-\mu$, set
$$h_k(t,u)=\prod_{P_i \text{ such that } m_{P_i}(\Cc)=k} H_{P_i}(t,u).$$
From Theorem \ref{thm:main}, it is clear that $h_k(t,u)$ divides $d_k(t,u)$. Actually, we have:
$$  d_k(t,u)=  h_k(t,u) \prod_{\substack{i=1,\ldots,r \\ j \in I_i}} (u_{i,j}t-t_{i,j}u)^{\overline{\epsilon}^k_{i,j}},$$
where
$$\overline{\epsilon}^k_{i,j}=\sum_{h>0 \text{ \rm such that } m_{P^i_{j,h}}(\Cc)=k} m^i_{j,h}=\epsilon^k_{i,j} - 
\begin{cases}
	m^i_{j,0} & \text{ if } m_{P_i}(\Cc)=k\\
	0 & \text{ otherwise. }
\end{cases}
$$
Therefore, following the notation in \cite[Conjecture 1]{Chen08}, we can establish the validity of the
first conjecture: 
$$d_k(t,u)=h_k(t,u)\prod_{s=k}^{n-\mu} \Psi_k^s(t,u),$$
where for all pair $2 \leq k,s \leq n-\mu$,
$$\Psi_k^s(t,u)= \prod_{\substack{i=1,\ldots,r \\ j \in I_i \\ m_{P_i}(\Cc)=s}} (u_{i,j}t-t_{i,j}u)^{\overline{\epsilon}^k_{i,j}}.$$
Obviously $\Psi_k^s(t,u)=0$ if $s<k$ (for multiplicities cannot increase through blowing up). Moreover, it is not hard to check that $\deg(\Psi_k^s(t,u))$ is $k$ times the number of infinitely near and non-proper singularities of multiplicity $r$
above a proper singular point of multiplicity $s$, which proves and makes more precise \cite[Conjecture 1]{Chen08}.

\medskip

Now, define for all $k=2,\ldots,n-\mu$  the \emph{reduced singular factor} $\tilde{d_k}(t,u)$ by the following procedure: 
\begin{itemize}
	\item Set $\tilde{d_k}(t,u):=d_k(t,u).$
	\item Then, for all $l=n-\mu$ down to $k+1$ do $\tilde{d_k}(t,u):=\frac{\tilde{d_k}(t,u)}{\gcd(\tilde{d_k}(t,u),d_l(t,u))}$.
\end{itemize}
Theorem \ref{thm:main} then implies that
$$  \tilde{d_k}(t,u)= \prod_{i=1,\ldots,r, \ j \in I_i} (u_{i,j} t-t_{i,j} u)^{\tilde{\epsilon}^k_{i,j}},$$
where
$$\tilde{\epsilon}^k_{i,j}=\max\{ \epsilon^k_{i,j} - \sum_{s=k+1}^{n-\mu}\epsilon^s_{i,j}  , 0\}.$$
Therefore $\tilde{\epsilon}^k_{i,j} \neq 0$ if and only if $m_{P_i}(\Cc)=k$ and in this case it is equal to $\epsilon^k_{i,j}$. It follows that 
$$  \tilde{d_k}(t,u)= \prod_{\substack{i=1,\ldots,r \\  j \in I_i \\ m_{P_i}(\Cc)=k}} (u_{i,j} t-t_{i,j} u)^{\epsilon^k_{i,j}} =\prod_{P_i \text{ such that } m_{P_i}=k} H_{P_i}(t,u)^{l_i},$$
where $l_i$ is the number of infinitely near points of multiplicity $k$ above $P_i$, including $P_i$. This proves \cite[Conjecture 2]{Chen08}.

\medskip

Before moving on to the next section, from a computational  as well as theoretical point of view, it is interesting  to point out  that $\CC[t]$ is a principal ideal domain, and hence one can use the theory of invariant factors over principal domains in order to get that the matrix $S_{p_\phi,q_\phi}(t,1)$ is equivalent to a diagonal 
matrix whose nonzero elements are 
$$d_{n}(t,1),d_n(t,1)d_{n-1}(t,1),\cdots,d_n(t,1)d_{n-1}(t,1)\ldots d_3(t,1)d_2(t,1),0.$$
We will recall and use this property for proving Theorem \ref{thm:main}. Notice also that a \emph{single} Smith normal form computation of $S_{p_\phi,q_\phi}(t,1)$ yields {\em all} the singular factors of $\phi$, after a linear change of coordinates of $\PP^1$ if necessary -- see Lemma \ref{lem:chgtcoord2}. Let us conclude this section with an illustrative example.

\begin{exmp}\label{example}
	Take the following parameterization of a rational algebraic plane curve of degree $n=10$: 
	$$\left\{
	\begin{array}{lcl}
		a & = & {s}^{2} \left( 2\,s+v \right) ^{2} \left( s + v \right) ^{6} \\
		b & = & {s}^{3} \left( 2\,s+v
		 \right) ^{5} \left(3\,{s}^{2}+ 2\,sv +{v}^{2} \right)   \\
		c & = & - \left( s+v \right) ^{10}
	 \end{array}\right.$$
The computation of the $\mu$-basis gives $\mu=4$ and
	$$p = (s+v)^4x_1+{s}^{2} \left( 2\,s+v \right) ^{2}x_3$$
	$$q = s \left( 3\,{s}^{2}+2\,sv+{v}^{2}\right) \left( 2\,s+v \right) ^{3} x_1 - \left( s+v \right) ^{6}x_2 $$
The associated  B\'ezout matrix  is then a $6\times 6$-matrix from which we get, after dehomogenization $u=1$ and a single Smith form computation, the following singular factors
	$$d_6(t)=(t+1)^6, \ d_5(t)=1, \ d_4(t)=\frac{1}{4} \left( 2\,t+1 \right) ^{2} \left( t+1 \right) ^{4}{t}^{2}, \ d_3(t)=\frac{1}{4} \left( 2\,t+1 \right) ^{2}t,$$
	$$d_2(t)=\frac{1}{43} \left( 43\,{t}^{6}+74\,{t}^{5}+71\,{t}^{4}+48\,{t}^{3}+21\,{t}^
	{2}+6\,t+1 \right)  \left( t+1 \right) ^{6} $$ 
and reduced singular factors
		$$\tilde{d}_6(t)=d_6(t), \ \tilde{d}_5(t)=d_5(t)=1, \tilde{d}_4(t)= \frac{1}{4} \left( 2\,t+1 \right) ^{2} {t}^{2}, \tilde{d}_3(t)=1,$$
		$$\tilde{d}_2(t)=\frac{1}{43} \left( 43\,{t}^{6}+74\,{t}^{5}+71\,{t}^{4}+48\,{t}^{3}+21\,{t}^
		{2}+6\,t+1 \right)$$
Although it is not always possible in general, we can recover here the multiplicity graph of the curve by using Theorem \ref{thm:main}. For that purpose, we start by inspecting $d_6$, the non-trivial singular factor with the highest index. We deduce that there is an irreducible singularity of multiplicity 6 corresponding to the parameter value $t=-1$. Looking at the other singular factors, we obtain that this singular point has a multiplicity 4 singular point in its first neighborhood and then has singular points of multiplicity 2 in  its third, fourth and fifth neighborhood. So we obtain the first branch of the multiplicity graph, see Fig.~\ref{multgraph}.

Now, by inspecting $d_4$, we deduce that there is a singular point of multiplicity 4 which is formed by two irreducible branch-curves, one, say $\zf_1$ centered at $t=-1/2$ and another one, say $\zf_2$ centered at $t=0$. The singular factor $d_3$ then shows that these two irreducible branches split up at the third neighborhood and have the multiplicities given in Fig.~\ref{multgraph} in the second neighborhood (the horizontal bar stands for the equivalence relation of the multiplicity graph). Then, since $d_2$ does not vanish at $t=-1/2$ or $t=0$ the mutliplicity graph at this multiplicity 4 point is known; see Fig.~\ref{multgraph}. 

Finally, a simple additional computation shows that the discriminant of $d_2/(t+1)^6$ is nonzero. Therefore, it only remains to add 3 ordinary double points to the multiplicity graph to complete it.

\begin{figure}[!h]
	\begin{center}
	\includegraphics{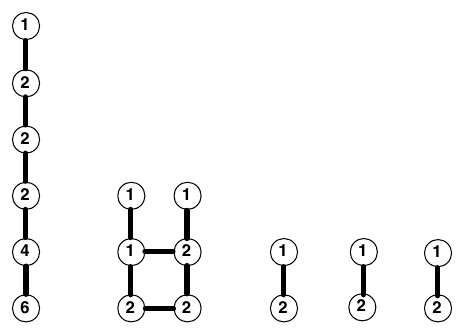}	
	\end{center}
\caption{Multiplicity graph of the degree 10 rational plane curve given in Example \ref{example}}
\label{multgraph}	
\end{figure}
\end{exmp}

\section{The case of ordinary curves}\label{3}

To prove Theorem \ref{thm:main}, we will proceed by induction on the minimal length of a
resolution of singularities of $\Cc$. The initial step would then correspond to the case where $\Cc$ is an ordinary curve, i.e. $\Cc$ has only ordinary singularities.  Although a proof of this result already appeared in \cite[Theorem 5]{Chen08}, we provide in this section an alternative proof for the sake of completeness and also as the preparation for the proof of Theorem \ref{thm:main}. 

\medskip

We start by recalling very briefly some results of invariant factors and Fitting ideals theory we will need in the sequel. The reader may consult any book on Algebra like \cite[Chapter III, \S 7 and Chapter XIX, \S2]{Lang} for proofs of these statements.

Let $R$ be a principal ideal domain and $M$  a finitely generated $R$-module. There exists a sequence of non invertible elements $(\alpha_1,\ldots,\alpha_\ell)$ such that
	\begin{itemize}
		\item[\rm i)] For all $i=1,\ldots, \ell-1$, $\alpha_i$ divides $\alpha_{i+1}$.
		\item[\rm ii)] $M$ is isomorphic to $\frac{R}{(\alpha_1)}\oplus \frac{R}{(\alpha_2)}\oplus\cdots 
\oplus\frac{R}{(\alpha_\ell)}$. 
	\end{itemize}
The elements $\alpha_1,\ldots,\alpha_\ell$ are unique up to multiplication by a unit and are called the \emph{invariant factors} of the $R$-module $M$. They can be recovered from the Fitting ideals of $M$, denoted $\F_i(M)$, since
\begin{multline*}
	\F_0(M)=(\alpha_1\ldots\alpha_\ell)\subset \F_1(M)=(\alpha_1\ldots\alpha_{\ell-1})\subset \ldots\subset \F_{\ell-1}(M)=(\alpha_1) \subset \\ 
	\F_\ell(M)=\F_{\ell+1}(M)=\cdots=R	
\end{multline*}
The smallest integer $r$ such that $\F_r(M)\neq 0$ is called the \emph{rank} of M. 

It is important to notice for further use that the Fitting invariants of $M$ commute with localization: if $S$ is a multiplicatively closed subset of $R$ not containing the zero element, then for every integer $\nu\geq 0$ we have
\begin{equation}\label{eq:localFitting}
	\F_\nu(M)R_S=\F_\nu(M_S)	
\end{equation}
Also, the Fitting invariants of $M$ can be computed from any finite $R$-presentation of $M$. Such a presentation corresponds to a matrix, say $A$, with entries in $R$. The above results mean that this matrix is equivalent to a diagonal matrix, sometimes called the Smith normal form of $A$, whose nonzero elements are the invariant factors of $M=\coker(A)$.

We will also use later in the text the following result which is due to Thompson \cite{Thompson82}.
\begin{thm}\label{thompson}
Let $A,B,C$ be three square matrices with entries in $R$ such that $AB=C$. If 
$\alpha_1 | \alpha_2 | \ldots | \alpha_n$, $\beta_1 | \beta_2 | \ldots |\beta_n$, $\gamma_1 | \gamma_2 | \ldots | \gamma_n$ are the invariant factors of $A,B$, and $C$ respectively, then 
$$ \alpha_{i_1}\alpha_{i_2}\cdots\alpha_{i_m}\beta_{j_1}\beta_{j_2}\cdots\beta_{j_m} | \gamma_{i_1+j_1-1}\gamma_{i_2+j_2-2}\cdots \gamma_{i_m+j_m-m}$$
whenever the integer subscripts satisfy
$$1 \leq i_1 < i_2 < \cdots < i_m, \  1 \leq j_1 < j_2 < \cdots < j_m, \ \ i_m+j_m \leq m+n.$$
\end{thm}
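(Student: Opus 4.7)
My plan is to reduce to the local case and then push through an exterior-power (Cauchy-Binet) argument. Since divisibility in the principal ideal domain $R$ is detected prime-by-prime, and since invariant factors commute with localization by \eqref{eq:localFitting}, I may localize at each nonzero prime $\pp$ and thereby assume $R$ is a discrete valuation ring with valuation $v$. Writing $a_i := v(\alpha_i)$, $b_i := v(\beta_i)$, $c_i := v(\gamma_i)$, the divisibility to be proved becomes the integer inequality
$$\sum_{k=1}^{m} a_{i_k} + \sum_{k=1}^{m} b_{j_k} \ \leq \ \sum_{k=1}^{m} c_{i_k + j_k - k}$$
for non-decreasing sequences $a_\bullet, b_\bullet, c_\bullet$ and strictly increasing subscripts $i_\bullet, j_\bullet$ with $i_m + j_m \leq m+n$.

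Next I would bring $A$ and $B$ to Smith normal form. Choose invertible $U_A, V_A, U_B, V_B$ so that $U_A A V_A = D_A$ and $U_B B V_B = D_B$ are diagonal with the $\alpha_i$ and $\beta_j$ on the respective diagonals. Then $U_A C V_B = D_A W D_B$ where $W := V_A^{-1} U_B^{-1}$ is invertible, so $C$ and $D_A W D_B$ share the same invariant factors. Each $(I,J)$-minor of $D_A W D_B$ factors as
$$\prod_{i \in I} \alpha_i \cdot \det\bigl(W_{I,J}\bigr) \cdot \prod_{j \in J} \beta_j,$$
so its valuation is at least $\sum_{i \in I} a_i + \sum_{j \in J} b_j$, with equality as soon as the corresponding minor of the invertible $W$ is a unit.

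I would then exploit compound matrices. For each $m$, the identity $AB = C$ passes to $A^{(m)} B^{(m)} = C^{(m)}$ via Cauchy-Binet, and the invariant factors of the $m$-th compound $M^{(m)}$ are (after reordering by divisibility) the products $\prod_{k=1}^m \mu_{i_k}$ over $m$-subsets of $\{1,\ldots,n\}$, where $\mu_\bullet$ are the invariant factors of $M$. Applying the elementary "gcd of $k$-minors" inequality to $A^{(m)} B^{(m)} = C^{(m)}$ already yields the principal specialization of Thompson's inequality, namely the case $i_k = j_k = k$, and more generally any inequality that compares the very same product of invariant factors on both sides.

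The main obstacle is to pass from this principal inequality to the general one with arbitrary strictly increasing subscripts and the precise offset pattern $i_k + j_k - k$ on the right. This is the combinatorial core of \cite{Thompson82}: one must select rows and columns of $C^{(m)}$ so that the chosen minor detects exactly the positions $i_k + j_k - k$ in the $\gamma$-staircase, while the corresponding minors of $A^{(m)}$ and $B^{(m)}$ isolate the contributions indexed by $i_\bullet$ and $j_\bullet$. I would attempt this by induction on $m$ combined with a majorization/partition argument in which $a_\bullet, b_\bullet, c_\bullet$ are viewed as Young diagrams and the target inequality is derived via Horn-type bookkeeping on Schubert positions; if this combinatorial identification proves too delicate, the cleanest fallback is to follow Thompson's original row-column construction.
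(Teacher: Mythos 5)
The paper does not give its own proof of this statement; it is quoted verbatim from Thompson and the reader is sent to \cite{Thompson82}. So the only thing to assess is whether your attempt actually constitutes a proof.

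Your preparatory reductions are all sound: localizing at each nonzero prime to reduce to a discrete valuation ring (justified by \eqref{eq:localFitting}), bringing $A$ and $B$ to Smith form so that $C$ shares invariant factors with $D_A W D_B$ for some invertible $W$, noting the factorization of each $(I,J)$-minor, and passing to $m$-th compounds via Cauchy--Binet. That machinery does yield the principal case $i_k=j_k=k$: the gcd of $m$-minors of $C=AB$ is divisible by the product of the corresponding gcds for $A$ and $B$, i.e.\ $\alpha_1\cdots\alpha_m\,\beta_1\cdots\beta_m \mid \gamma_1\cdots\gamma_m$. (In fact this much already follows from Cauchy--Binet alone, with none of the Smith-form preparation.)

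But the proof stops there. You identify the remaining obstacle --- choosing a minor of $C$ that detects exactly the positions $i_k+j_k-k$ on the $\gamma$ side while simultaneously isolating $\alpha_{i_k}$ and $\beta_{j_k}$ --- and then defer it to ``induction on $m$ combined with a majorization/partition argument'' or, failing that, to ``Thompson's original row-column construction.'' That deferred step \emph{is} the theorem; everything else is routine. In particular, the case the paper actually uses in Section~\ref{4} (both index sequences are initial segments except that the top $j$-index is shifted) is not covered by the principal specialization you establish, so even the application at hand is not reached. Also be careful with the claim that the $(I,J)$-minor of $D_A W D_B$ has valuation exactly $\sum_{i\in I}a_i+\sum_{j\in J}b_j$: invertibility of $W$ makes $\det W$ a unit but says nothing about individual minors $\det(W_{I,J})$, so the inequality does not convert to the needed equality without Thompson's selection argument. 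As written, there is a genuine gap; either carry out the row-and-column selection of \cite{Thompson82} in full, or do as the paper does and present the statement as a citation.
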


Now, we examine the behavior of the singular factors under linear change of coordinates, it will also be very useful in the sequel.

\begin{lem}\label{lem:chgtcoord1} The singular factors of a proper parameterization do not depend neither on the choices of the $\mu$-basis nor the coordinates of $\PP^2$.
\end{lem}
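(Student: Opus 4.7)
The plan is to leverage the fact that the singular factors are invariants of the Sylvester matrix $S_{p_\phi,q_\phi}(t,u)$ under left and right multiplication by matrices whose determinant is a unit in $\CC[t,u]$, i.e., a nonzero constant. It therefore suffices to show that each of the two transformations under consideration, a change of $\mu$-basis and a change of linear frame on $\PP^2_\CC$, converts $S_{p_\phi,q_\phi}(t,u)$ into a matrix that differs from the original by right multiplication by an invertible matrix with constant entries. This preserves greatest common divisors of minors, hence the singular factors up to the nonzero constants systematically ignored in the paper.

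\textbf{Changing the $\mu$-basis.} When $\mu<n-\mu$ the syzygy module of $(a,b,c)$ has a minimal generator $p$ which is unique up to a scalar, and any other $\mu$-basis is of the form $p'=\lambda p$, $q'=\nu q + f\,p$ with $\lambda,\nu\in\CC^*$ and $f\in\CC[s,v]_{n-2\mu}$. A direct substitution then gives
$$ \alpha\,p'_\phi + \beta\,q'_\phi \ =\ (\lambda\alpha + f\beta)\,p_\phi + \nu\beta\,q_\phi,$$
so that the new Sylvester map factors as the old one composed with the $\CC[t,u]$-linear endomorphism $T$ of the source $\CC[t,u]\otimes_\CC(\CC[s,v]_{n-\mu-1}\oplus \CC[s,v]_{\mu-1})$ sending $(\alpha,\beta)$ to $(\lambda\alpha+f\beta,\nu\beta)$. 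In the canonical monomial basis, $T$ is represented by an upper-triangular block matrix with entries in $\CC$, having scalar blocks $\lambda\,\mathrm{Id}$ and $\nu\,\mathrm{Id}$ on the diagonal and the multiplication-by-$f$ map $\CC[s,v]_{\mu-1}\to \CC[s,v]_{n-\mu-1}$ as the off-diagonal block; since $\lambda,\nu\neq 0$, it is invertible. When $\mu = n-\mu$ the $\mu$-basis is determined only up to an arbitrary element of $\mathrm{GL}_2(\CC)$, but the same computation produces an invertible constant $T$.

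\textbf{Changing the coordinates of $\PP^2$.} Let $A\in\mathrm{GL}_3(\CC)$ and set $\phi':=A\phi$. A row vector $(g_1,g_2,g_3)$ is a syzygy of $\phi$ if and only if $(g_1,g_2,g_3)A^{-1}$ is a syzygy of $\phi'$. Hence, from a $\mu$-basis $(p,q)$ for $\phi$ one obtains a $\mu$-basis $(p',q') := (p A^{-1},\,q A^{-1})$ for $\phi'$ of the same bi-degrees, and
$$p'_{\phi'} \ =\ (pA^{-1})(A\phi)\ =\ p\,\phi\ =\ p_\phi,$$
and similarly $q'_{\phi'} = q_\phi$. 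Therefore the Sylvester matrix is literally unchanged for this distinguished choice of $\mu$-basis of $\phi'$, and by the previous paragraph any other $\mu$-basis of $\phi'$ yields a Sylvester matrix differing from the original by right multiplication by an invertible constant matrix.

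\textbf{Main difficulty.} The only non-routine point is the identification, when $\mu<n-\mu$, of the multiplication-by-$f$ off-diagonal block as a $\CC[t,u]$-linear piece of the source endomorphism $T$ (which is straightforward because $f$ does not involve the variables $t,u$), together with the verification that $T$ is globally invertible with constant entries. Everything else reduces to the standard fact that greatest common divisors of minors, and hence the singular factors, are unaffected by right multiplication by a unit matrix over $\CC[t,u]$.
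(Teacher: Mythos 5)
Your proof is correct and follows essentially the same route as the paper: the paper's (very terse) argument is that both operations correspond to elementary transformations of the Sylvester matrix, hence preserve its Fitting ideals, and you have simply unpacked this into an explicit verification that a change of $\mu$-basis corresponds to right multiplication by an invertible constant matrix $T$, while a change of $\PP^2$-coordinates composed with the induced change of $\mu$-basis leaves $S_{p_\phi,q_\phi}(t,u)$ literally unchanged. This is a faithful, more detailed version of the paper's proof rather than a genuinely different approach.
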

\begin{proof} Indeed, a change of $\mu$-bases or a change of coordinates of $\PP^2$ correspond to elementary transformation of the Sylvester matrix $S_{p_\phi,q_\phi}(t,u)$. Therefore, its Fitting invariants remains unchanged so that the same holds for their codimension one part. 
\end{proof}

Recall that a linear change of coordinates of $\PP^1$, that is to say an isomorphism
$$\psi : \PP^1 \xrightarrow{\sim} \PP^1 : (t':u') \mapsto (\alpha t'+\beta u':\delta t'+\gamma u')$$ 
where $\alpha,\beta,\delta,\gamma \in \CC$ and $\alpha\gamma-\beta\delta\neq 0$, corresponds to the isomorphism of graded rings (``base change'' map)
\begin{eqnarray*}
	\psi^\sharp : \CC[t,u] & \xrightarrow{\sim} & \CC[t',u'] \\
	 t & \mapsto & (\gamma t' - \beta u' )/(\alpha\gamma-\beta\delta)\\
	u & \mapsto & (\delta t' - \alpha u' )/(\alpha\gamma-\beta\delta).
\end{eqnarray*}
The following lemma shows that computing singular factors ``commutes'' with linear changes of coordinates of $\PP^1$. 
\begin{lem}\label{lem:chgtcoord2} Let $d_1(t,u),d_2(t,u),\ldots,$ $d_n(t,u)$ be the singular factors of the parameterization $\phi$ of $\Cc$ and let $\psi$ be a linear change of coordinates in $\PP^1$. Then, the homogeneous polynomials in $\CC[t',u']$
	$$\psi^\sharp(d_1(t,u)),\psi^\sharp(d_2(t,u)),\ldots, \psi^\sharp(d_n(t,u))$$
	are the singular factors of the parameterization $\phi\circ\psi:\PP^1\rightarrow \PP^2$ of $\Cc$.
\end{lem}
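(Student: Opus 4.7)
The plan is to realize the Sylvester matrix $S_{p_{\phi \circ \psi}, q_{\phi \circ \psi}}(t', u')$ as the image of $S_{p_\phi, q_\phi}(t, u)$ under the substitution $(t, u) \mapsto (\psi^\sharp(t), \psi^\sharp(u))$, up to left and right multiplication by invertible constant matrices arising from a change of monomial basis in the $(s, v)$-variables. Since multiplication by invertible constant matrices preserves Fitting ideals, and substitution along the ring isomorphism $\psi^\sharp$ transports Fitting ideals to Fitting ideals, the conclusion will follow from the fact that singular factors are determined by (the codimension-one parts of) these ideals.

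First I construct an explicit $\mu$-basis of $\phi \circ \psi$ from one for $\phi$. If $\{p, q\}$ is a $\mu$-basis of $\phi$, then applying $\psi$ componentwise to the $(s, v)$-variables yields triples $p \circ \psi$ and $q \circ \psi$ that are syzygies of $(a \circ \psi,\, b \circ \psi,\, c \circ \psi)$ of the same respective degrees $\mu$ and $n - \mu$. Hence $\{p \circ \psi,\, q \circ \psi\}$ is a $\mu$-basis of $\phi \circ \psi$, and the associated bi-homogeneous forms satisfy
\[
  p_{\phi \circ \psi}(s, v;\, t', u') = p_\phi(\psi(s, v);\, \psi(t', u')),
\]
and similarly for $q_{\phi \circ \psi}$.

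Next, the substitution $(s, v) \mapsto \psi(s, v)$ acts on each graded piece $\CC[s, v]_d$ by an invertible $\CC$-linear map whose matrix in the monomial basis I denote $M_d \in \mathrm{GL}_{d+1}(\CC)$. Using the above expression for $p_{\phi \circ \psi}$ and $q_{\phi \circ \psi}$, a direct comparison of the Sylvester maps (\ref{eq:sylvestermatrix}) for $\phi$ and $\phi \circ \psi$ yields the matrix identity
\[
  S_{p_{\phi \circ \psi}, q_{\phi \circ \psi}}(t', u') = M_{n-1} \cdot S_{p_\phi, q_\phi}(\psi^\sharp(t),\, \psi^\sharp(u)) \cdot (M_{n-\mu-1} \oplus M_{\mu-1})^{-1},
\]
where the middle factor denotes the matrix obtained from $S_{p_\phi, q_\phi}(t, u)$ by substituting $\psi^\sharp(t)$ and $\psi^\sharp(u)$ in place of $t$ and $u$ in each entry.

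Finally, since $M_{n-1}$, $M_{n-\mu-1}$ and $M_{\mu-1}$ are invertible constant matrices, they do not affect Fitting ideals, so for every $k$ we obtain
\[
  \F_k(S_{p_{\phi \circ \psi}, q_{\phi \circ \psi}}) = \psi^\sharp(\F_k(S_{p_\phi, q_\phi}))
\]
as ideals of $\CC[t', u']$. Because the singular factors are determined by the codimension-one parts of these Fitting ideals (equivalently, by gcds of minors) and $\psi^\sharp$ is a ring isomorphism, the singular factors of $\phi \circ \psi$ are precisely the images $\psi^\sharp(d_k(t, u))$. The only nontrivial step is verifying the matrix identity above: it requires tracking simultaneously the action of $\psi$ on the eliminated variables $(s, v)$ (yielding the scalar change-of-basis matrices $M_d$) and on the retained variables $(t, u)$ (yielding the substitution $\psi^\sharp$). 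Once this is in place, the rest is a formal consequence of the behavior of Fitting ideals under scalar invertible transformations and ring isomorphisms.
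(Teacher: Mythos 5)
Your proof is correct and follows essentially the same route as the paper's: identify a $\mu$-basis of $\phi\circ\psi$ by applying the change of variables, relate the two Sylvester matrices, and invoke the compatibility of Fitting ideals with the base change $\psi^\sharp$. The only cosmetic difference is that you make explicit the conjugation by the constant matrices $M_d$ coming from the change of monomial basis in $(s,v)$, whereas the paper absorbs this silently into the statement ``the Sylvester matrix obtained by the change of coordinates is $S_{p_\phi,q_\phi}(t,u)\otimes_{\CC[t,u]}\CC[t',u']$'' and then appeals to right-exactness of tensor product; your version is, if anything, a bit more careful on this point.
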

\begin{proof} It is not hard to check that if $\{p,q\}$ is a $\mu$-basis of $\phi$ then $\psi^\sharp(p),\psi^\sharp(q)$ is a $\mu$-basis of $\phi\circ\psi$. Now, the map $\psi^\sharp$ gives a  $\CC[t,u]$-module structure to $\CC[t',u']$ so that the Sylvester matrix obtained by the change of coordinates $\psi$ is nothing but $S_{p_\phi,q_\phi}(t,u)\otimes_{\CC[t,u]} \CC[t',u']$.  Therefore, by the right-exactness of tensor product we deduce that the Fitting ideals of the cokernel of $S_{p_\phi,q_\phi}(t,u)\otimes_{\CC[t,u]} \CC[t',u']$ are equal to the Fitting ideals of $S_{p_\phi,q_\phi}(t,u)$ tensored with $\CC[t',u']$ over $\CC[t,u]$. It follows that the same property holds for the codimension one part of these Fitting ideals. Hence, the lemma is proved.
\end{proof}

Let $M$ stand for the cokernel of the Sylvester matrix $S_{p_\phi,q_\phi}(t,1)$ defined in \eqref{eq:sylvestermatrix}. Note that $M$ is a $\CC[t]$-module.

\begin{prop}\label{prop:F1M} With above notation, we have
	$\F_0(M)=0$ and $\F_1(M)=(\Delta(t,1))$; in particular, $M$ has rank 1. 
	Moreover,  $\F_\ell(M)=R$ for all $\ell> n-\mu$.
\end{prop}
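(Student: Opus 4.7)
The argument naturally splits into three parts, one per assertion.

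For $\F_0(M) = 0$: the zeroth Fitting ideal is generated by the Sylvester resultant $\det S_{p_\phi,q_\phi}(t,1) = \Res_{(s,v)}(p_\phi,q_\phi)(t,1)$. Since $(p_1,p_2,p_3)$ is a syzygy of $(a,b,c)$, evaluating $p_\phi(s,v;t,1) = p_1(s,v)a(t,1) + p_2(s,v)b(t,1) + p_3(s,v)c(t,1)$ at $(s,v) = (t,1)$ gives zero identically in $t$; the same holds for $q_\phi$. Thus $(s - tv)$ is a common $(s,v)$-factor of $p_\phi,q_\phi$ of positive degree, forcing the resultant to vanish. The same syzygy observation, combined with Proposition~\ref{prop:inversionformula} at a generic parameter where $\phi$ is injective by birationality, shows that $(s - tv)$ is the \emph{only} common $(s,v)$-factor over $\CC(t)$; the Sylvester matrix has corank exactly one over $\CC(t)$ and $M$ has rank one over $\CC[t]$.

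For $\F_1(M) = (\Delta(t,1))$: write $p_\phi = (s - tv) P$ and $q_\phi = (s - tv) Q$ with $\deg_{(s,v)} P = \mu - 1$, $\deg_{(s,v)} Q = n - \mu - 1$, and factor $S_{p_\phi,q_\phi}(t,1) = B \circ A$, where $B \colon \CC[s,v]_{n-2} \to \CC[s,v]_{n-1}$ is (injective) multiplication by $(s - tv)$ and $A \colon \CC[s,v]_{n-\mu-1} \oplus \CC[s,v]_{\mu-1} \to \CC[s,v]_{n-2}$ sends $(\alpha, \beta) \mapsto \alpha P + \beta Q$. The induced exact sequence
\[ 0 \to \coker A \to M \to \coker B \to 0 \]
has $\coker B$ free of rank one over $\CC[t]$ (iterating the relation $s^{j+1} v^{n-2-j} \equiv t \cdot s^j v^{n-1-j}$ shows that $v^{n-1}$ is a free generator), so the sequence splits and $\F_1(M) = \F_0(\coker A)$. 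Since $P,Q$ are coprime in $\CC[t][s,v]$, they form a regular sequence, and the Koszul syzygy $(-Q, P)$ generates the rank-one kernel of $A$ as a primitive vector. The determinantal identity $\det A^{(j)} = (-1)^j v_j g$ for the maximal minors of a matrix with primitive kernel $v = (-Q, P)$ then gives $\F_0(\coker A) = (g)$ for a single polynomial $g \in \CC[t]$. On one hand, Theorem~\ref{thm:conductor} identifies $\Delta(t,1) = \SRes(p,q)(a,b,c)(t,1)$ with the first principal subresultant, itself a specific $(n-1)$-minor of $S_{p_\phi,q_\phi}(t,1)$, so $g \mid \Delta(t,1)$. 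On the other, a Hilbert-series computation on the Koszul resolution of $\coker A$ yields $\deg g = \dim_{\CC} \coker A = (n-1)(n-2) = \deg \Delta(t,1)$, forcing the equality $g = \Delta(t,1)$ up to a unit.

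For $\F_\ell(M) = R$ with $\ell > n - \mu$: since Fitting ideals are nested, it suffices to verify $\F_{n-\mu+1}(M) = R$, i.e.\ that the $(\mu-1) \times (\mu-1)$ minors of $S_{p_\phi,q_\phi}(t,1)$ generate $\CC[t]$. For each $t_0 \in \CC$, Proposition~\ref{prop:inversionformula} identifies the $(s,v)$-gcd of $p_\phi(\cdot; t_0,1)$ and $q_\phi(\cdot; t_0,1)$ with $H_{\phi(t_0:1)}$, of degree $m_{\phi(t_0:1)}(\Cc)$; the rank of $S(t_0,1)$ is therefore $n - m_{\phi(t_0:1)}(\Cc) \geq \mu$ by Remark~\ref{rmm}. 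Hence some $(\mu-1) \times (\mu-1)$ minor is nonzero at each $t_0$, so the $(\mu-1)$-minors share no common zero in $\CC$ and generate the unit ideal. The main obstacle is the degree-matching step in the middle part: rigorously asserting $\dim_{\CC} \coker A = (n-1)(n-2)$ requires either a Hilbert-series bookkeeping on the Koszul resolution of $\coker A$, or a local analysis of $\coker A$ at each branch parameter $(t_{i,j} : u_{i,j})$ matching the exponents $\epsilon_{i,j}$ supplied by Theorem~\ref{thm:conductor}.
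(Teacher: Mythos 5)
Your overall strategy mirrors the paper's: both proofs exploit the factorization of the Sylvester map through $P = p_\phi/(s-tv)$ and $Q = q_\phi/(s-tv)$, both use the common factor $(s-tv)$ to show $\F_0(M)=0$, and both use Proposition~\ref{prop:inversionformula} with Remark~\ref{rmm} to control the coranks for the last assertion. That part is fine.

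The genuine gap is in the middle step, and you flag it yourself. Having reduced $\F_1(M)$ to $\F_0(\coker A)=(g)$, you show $g\mid\Delta(t,1)$ and then need $\deg g = (n-1)(n-2)$ to force equality. You wave at a ``Hilbert-series bookkeeping on the Koszul resolution'', but the Koszul sequence
$$0\to R\xrightarrow{(Q,-P)}R^n\xrightarrow{A}R^{n-1}\to\coker A\to 0$$
does not by itself determine $\deg g$: the ranks balance ($1-n+(n-1)=0$) so torsion is consistent, but the Euler-characteristic computation is degree-neutral and gives no lower bound on the order of the torsion module. One can construct matrices with primitive kernel vector where $g$ is a nontrivial polynomial, so primitivity alone tells you nothing about $\deg g$. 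The paper closes exactly this gap with an external input you do not use, namely El Kahoui's formula $\mathrm{S}_1(p_\phi,q_\phi;s,v)=\Res_{(s,v)}(P,Q)\,(s-tv)v^{n-3}$ (\cite[Theorem 2.2]{Elk05}), whose leading coefficient in $s$ immediately gives $\Delta(t,1)=\Res_{(s,v)}(P,Q)$, from which the equality with $g$ follows without any degree count. If you wish to avoid El Kahoui, you must actually carry out the ``local analysis'' you mention, but that is essentially re-deriving Theorem~\ref{thm:conductor}'s local content and is not shorter.

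A secondary point: you assert that $(-Q,P)$ is a \emph{primitive} element of $\CC[t]^n$, which is needed both for the identity $\det A^{(j)}=(-1)^j v_j g$ to produce the gcd and for the claim that the degree-$(n-2)$ Koszul kernel over $\CC[t]$ is exactly $\CC[t]\cdot(Q,-P)$ rather than something larger. This is true but not automatic; it requires observing that a common factor $h(t)$ of all coefficients of $P$ and $Q$ would force $\sum_i\alpha_ip_i=\sum_i\alpha_iq_i\equiv 0$ for $\alpha=\phi(t_0:1)$ at any root $t_0$ of $h$, and then using the syzygies $(b,-a,0)$, $(c,0,-a)$, $(0,c,-b)$ together with $\gcd(a,b,c)=1$ to derive $\alpha=0$, a contradiction. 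Spell this out; as written it is an unsubstantiated claim that the argument silently relies on.
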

\begin{proof} Since \eqref{eq:sylvestermatrix} provides a finite presentation of $M$, one can compute the Fitting invariants of $M$ as the determinantal ideals of \eqref{eq:sylvestermatrix}, namely the Sylvester matrix of $p_\phi$ and $q_\phi$. Thus,  
$\F_0(M)$ is generated by the determinant of this Sylvester matrix, which is equal to zero.
\par 
Recall the following classic property of the Sylvester matrix: the corank of the Sylvester matrix of two given polynomials is equal to the degree of the gcd of these two polynomials. 
So, by using Proposition \ref{prop:inversionformula} and Remark \ref{rmm}, we have that the Fitting invariants of $M$ are supported on the singular locus of  $\Cc$ and 
$\F_\ell(M)=A$ for all $\ell> n-\mu$ since there are no singular points on $\Cc$ of multiplicity $>n-\mu$.
	
	To prove that $\F_1(M)=(\Delta(t,1))$,  we proceed as follows: let
	$$\begin{array}{ccc}
	P(s,v;t)&=&\frac{p_\phi(s,v;t,1)}{s-t\,v}\\
	Q(s,v;t)&=&\frac{q_\phi(s,v;t,1)}{s-t\,v}.
	\end{array}
	$$
Applying the results given in \cite[Theorem $2.2$]{Elk05}, we get that
\begin{equation}\label{elk}
\mbox{S}_1(p_\phi,q_\phi;s,v)=Res_{(s,v)}(P(s,v;t),Q(s,v;t))\,(s-t\,v)v^{n-3},
\end{equation}
where $\mbox{S}_1$ denotes the first subresultant polynomial operator applied to the sequence
$p_\phi,q_\phi$ with respect to the homogeneous variables $(s,v)$. By taking leading coefficients with respect to $s$ in
both sides of the latter equality, we get
$$\Delta(t,1)=\SRes(p,q)(a(t,1),b(t,1),c(t,1)) =\Res_{(s,v)}(P(s,v;t),Q(s,v;t)),
$$
the first equality is given by Theorem \ref{thm:conductor}.
\par
Now, we can decompose the map (\ref{eq:sylvestermatrix}) as the composition between:
$$\begin{array}{ccc}
	\CC[t]\otimes_\CC \CC[s,v]_{n-\mu-1} \oplus \CC[t]\otimes_\CC \CC[s,v]_{\mu-1} &\stackrel{\psi_1}{ \rightarrow} & \CC[t]\otimes_\CC \CC[s,v]_{n-2}\\ 
	 \alpha \oplus \beta  & \mapsto & \alpha \frac{p_\phi(s,v;t,1)}{s-tv}+ \beta \frac{q_\phi(s,v;t,1)}{s-tv}
\end{array}
$$
and
$$\begin{array}{ccc}
 \CC[t]\otimes_\CC \CC[s,v]_{n-2}&\stackrel{\times (s-tv)}{\rightarrow}& \CC[t]\otimes_\CC \CC[s,v]_{n-1}\\ 
\gamma&
	 \mapsto&(s-tv)\gamma.
\end{array}
$$

By setting $v=1$, it is now clear that all the minors of size $(n-1)$ in the matrix
$S(a(t,1),b(t,1),c(t,1))$ are linear combinations of maximal minors of $\psi_1$ times maximal minors of the multiplication map in the right. If we build the matrices of these morphisms using  the basis
$\{(s-tv)^js^{n-1-j}\}$ instead of  $\{s^jv^{n-1-j}\}$  and compute the matrices of these linear transformations,  then the morphism of multiplication by $s-tv$ would have an $n\times (n-1)$ matrix of
the form
$$\left(\begin{array}{cccc}
0&0&\ldots&0\\
1&0&\ldots&0\\
0&1&\ldots&0\\
\vdots&\vdots&\ddots&\vdots\\
0&0&\ldots&1\end{array}
\right).
$$
This essentially implies that in these bases there is only one nonzero minor of $\psi_1$ to be considered, and due to (\ref{elk}), it is easy to see that this minor is actually $\SRes(p,q)(a(t,1),b(t,1),c(t,1)) =\Res_{(s,v)}(P(s,v;t),Q(s,v;t))$. The proof follows straightforwardly from here.
	\end{proof}

\begin{cor}\label{cor:F1M} With the above notation, up to multiplicition by a nonzero constant in $\CC$ we have
	$d_k(t,u)=1$  for all $k>n-\mu$ and 
	$$\Delta(t,u)={d_{n-\mu}(t,u)}^{n-\mu-1}{d_{n-\mu-1}(t,u)}^{n-\mu-2}\cdots d_2(t,u)$$
\end{cor}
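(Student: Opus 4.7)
My plan is to derive the corollary directly from Proposition \ref{prop:F1M} using the standard correspondence between Fitting ideals of a module and gcd's of minors of any of its presentation matrices. First I would set $u=1$ and denote by $D_j(t)\in\CC[t]$ the gcd of the $j\times j$ minors of $S_{p_\phi,q_\phi}(t,1)$, with the convention $D_0(t):=1$. Since this Sylvester matrix is an $n\times n$ presentation of $M$, one has $\F_\ell(M)=(D_{n-\ell}(t))$ for $\ell=0,1,\ldots,n$. Rewriting the defining identity of the singular factors, dehomogenized at $u=1$ and re-indexed via $i=\ell+1$, gives the bookkeeping formula
$$D_{n-\ell}(t)=\prod_{k=\ell+1}^{n} d_k(t,1)^{k-\ell},\qquad \ell=0,1,\ldots,n-1.$$

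Next I would establish the first assertion. Here I would use the fact that $\F_{n-\mu}(M)=R$, which is essentially contained in the proof of Proposition \ref{prop:F1M}: that proof shows that the vanishing locus of $\F_\ell(M)$ consists of those $t_0$ for which $m_{\phi(t_0:1)}(\Cc)>\ell$, and by Remark \ref{rmm} no singular point of $\Cc$ has multiplicity exceeding $n-\mu$. Hence $D_\mu(t)=1$ up to a nonzero scalar, and unique factorization in $\CC[t]$ applied to the displayed formula at $\ell=n-\mu$ forces $d_k(t,1)=1$ for every $k\geq n-\mu+1$. Homogenizing then yields $d_k(t,u)=1$ for every $k>n-\mu$.

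Finally, for the factorization of $\Delta$ I would combine the identity $\F_1(M)=(\Delta(t,1))$ of Proposition \ref{prop:F1M}, equivalent to $D_{n-1}(t)=\Delta(t,1)$, with the previous step to obtain
$$\Delta(t,1)=\prod_{k=2}^{n} d_k(t,1)^{k-1}=\prod_{k=2}^{n-\mu} d_k(t,1)^{k-1},$$
and homogenization in $u$ produces the stated factorization of $\Delta(t,u)$. The only mildly subtle point I anticipate is the sharpening $\F_{n-\mu}(M)=R$, which goes slightly beyond the statement of Proposition \ref{prop:F1M} but is immediate from its proof together with Remark \ref{rmm}; the rest is purely formal bookkeeping via unique factorization.
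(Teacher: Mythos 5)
Your approach is essentially the paper's: invoke Proposition \ref{prop:F1M}, translate to gcd's of minors via the Fitting ideal correspondence, and do the bookkeeping with the defining identity of the $d_k$'s. You are also right that one needs the sharpening $\F_{n-\mu}(M)=R$, which is not quite what Proposition \ref{prop:F1M} says literally but does follow from its proof (via the corank interpretation and Remark \ref{rmm}).

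There is, however, a gap at exactly the step you wave off as ``purely formal bookkeeping'': the passage from an identity in $\CC[t]$ (i.e.\ at $u=1$) back to the homogeneous statement in $\CC[t,u]$. Proposition \ref{prop:F1M} is a statement about $M=\coker S_{p_\phi,q_\phi}(t,1)$, whereas the singular factors $d_k(t,u)$ are defined from the homogeneous matrix $S_{p_\phi,q_\phi}(t,u)$, so each $d_k(t,u)$ may carry a factor of $u$ which simply disappears when you set $u=1$. Concretely, writing $G_i(t,u)$ for the gcd of the $(n+1-i)$-minors of $S_{p_\phi,q_\phi}(t,u)$, one has $G_i(1,0)=0$ precisely when $m_{\phi(1:0)}(\Cc)\ge i$ (by Proposition \ref{prop:inversionformula} and the corank interpretation of the Sylvester matrix), and likewise $\Delta(1,0)=0$ when $\phi(1:0)$ is singular (by Theorem \ref{thm:conductor}). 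Thus if $\phi(1:0)$ happens to be a singular point, both $\Delta(t,u)$ and some $d_k(t,u)$ with $2\le k\le n-\mu$ are divisible by $u$, and the equality $\Delta(t,1)=\prod_{k=2}^{n-\mu}d_k(t,1)^{k-1}$ only determines $\Delta(t,u)/\prod d_k(t,u)^{k-1}$ up to an undetermined power of $u$. Your ``homogenization in $u$'' therefore does not, by itself, produce the stated factorization.

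This is precisely what the paper's one-line proof addresses: perform a linear change of coordinates first so that $\phi(1:0)$ is not a singular point (and the exceptional configurations implicit in the proof of Proposition \ref{prop:F1M} are avoided), using Lemma \ref{lem:chgtcoord2} to guarantee that singular factors transform compatibly under such a change, and only then apply Proposition \ref{prop:F1M}. After that normalization, neither side has a $u$-factor and the homogenization really is a formality. For the first assertion ($d_k(t,u)=1$ for $k>n-\mu$) you could alternatively argue directly in the homogeneous setting that $G_i(1,0)\ne 0$ for $i>n-\mu$ using Remark \ref{rmm}, as is done in the proof of Proposition \ref{prop:HdivD}; but some such sentence is required, and for the factorization of $\Delta(t,u)$ the coordinate change (or an equivalent degree count) is genuinely needed.
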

\begin{proof} Perform a sufficiently general linear change of coordinates in $\PP^2$ in such a way that there are no singularities of $\Cc$ at $\{x_3=0\}$, and then apply Proposition \ref{prop:F1M} and Lemma \ref{lem:chgtcoord2}.
\end{proof}

\begin{rem}
If instead of $P(s,v;t),\,Q(s,v;t)$ we used in the proof of Proposition \ref{prop:F1M}
	$\frac{F(s,v;t,1)}{s-tv},\,\frac{G(s,v;t,1)}{s-tv}$
	then we would not have
	$\Delta(t,1)$ equals to the resultant of these two polynomials anymore, as we may loose some kind of uniqueness by allowing this symmetry (see for instance the statement of Theorem
	$3.1$ in \cite{GRY02}). We will give a proper factorization of this polynomial in Section \ref{6}.	
\end{rem}

\begin{prop}\label{prop:HdivD} For any proper singularity $Q$ on $\Cc$, the polynomial $H_Q(t,u)$ defined in Proposition \ref{prop:inversionformula} divides $d_{m_Q(\Cc)}(t,u)$. Also, we have that
$H_Q(t,u)$ and $d_k(t,u)$ are coprime for all $k>m_Q(\Cc)=\deg\big(H_Q(t,u)\big)$.
\end{prop}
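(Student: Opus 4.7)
The plan is to handle each of the two claims preimage by preimage. By Lemma \ref{lem:chgtcoord2} I can fix a preimage $(t_i:1)$ of $Q$ with branch multiplicity $m_i$ and pass to the completion $\hat{R} = \CC[[\pi]]$ of $\CC[t]$ at $\pi = t - t_i$. Working with the local invariant factors $\alpha_1 \mid \cdots \mid \alpha_{n-1}$ of the torsion part of $M = \coker(S_{p_\phi,q_\phi}(t,1))$ and the relation $d_k = \alpha_{n-k+1}/\alpha_{n-k}$ (with the convention $\alpha_0 = 1$, obtained by comparing the Fitting-ideal description of the $d_k$'s with the invariant factor decomposition of $M$), both assertions translate into valuation statements on individual $\alpha_j$.

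For the coprimality claim I would specialize at $t = t_i$: Proposition \ref{prop:inversionformula} gives $\gcd\bigl(p_\phi(\cdot;t_i,1), q_\phi(\cdot;t_i,1)\bigr) = H_Q$ of degree $m := m_Q(\Cc)$, so the specialized Sylvester matrix has corank exactly $m$. Hence exactly $m-1$ of the $\alpha_j$ pick up positive $\pi$-valuation, namely $\alpha_{n-m+1}, \ldots, \alpha_{n-1}$, while $\alpha_1, \ldots, \alpha_{n-m}$ are local units. Translating gives $\val_{t_i}(d_k) = 0$ for every $k > m$; applying this at every preimage of $Q$ yields the coprimality claim.

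The divisibility claim is equivalent to $\val_\pi(\alpha_{n-m+1}) \ge m_i$, equivalently (since the earlier invariant factors are units) to the Fitting-ideal statement that every $(n-m+1) \times (n-m+1)$ minor of $S_{p_\phi, q_\phi}(t,1)$ lies in $(\pi^{m_i})$. The key structural ingredient I would use is that, after a linear change of coordinates placing $Q$ at $(0:0:1)$, both $p_\phi$ and $q_\phi$ lie in the $m_i$-th power of the maximal ideal $(\sigma, \pi) \subset \CC[[\sigma, \pi]]$, where $\sigma = s - t_i$. Indeed, the branch multiplicity forces $a(t_i+\pi)$ and $b(t_i+\pi)$ to have $\pi$-valuation at least $m_i$ while $c(t_i+\pi)$ is a unit, and Proposition \ref{prop:inversionformula} forces $p_3$ and $q_3$ to have $\sigma$-valuation at least $m_i$; decomposing $p_\phi = a p_1 + b p_2 + c p_3$ then yields $p_\phi \in (\pi^{m_i}) + (\sigma^{m_i}) \subseteq (\sigma,\pi)^{m_i}$, and similarly for $q_\phi$.

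The main obstacle is then to deduce from this containment that all $(n-m+1)$-minors of the Sylvester matrix are divisible by $\pi^{m_i}$. A naive entry-wise bound (the row-$k$ entry of the column $\sigma^j p_\phi$ has $\pi$-valuation at least $\max(0, m_i - k + j)$) is not sufficient: the minimum over bijective matchings of these bounds can be strictly less than $m_i$, so cancellations in the determinantal expansion must be exploited. To handle this I would pass to the factorization $S_{p_\phi,q_\phi} = M_1 \circ \psi_1$ introduced in the proof of Proposition \ref{prop:F1M}, which preserves the torsion part of the cokernel (it is killed only by the free summand $\hat{R}$) and hence the relevant invariant factors. A refinement of the leading-term analysis, using that the common factor $(s - t) = \sigma - \pi$ is not a zero-divisor on the associated graded of $\CC[[\sigma,\pi]]$, gives $\tilde{p} := p_\phi/(s-t)$ and $\tilde{q} := q_\phi/(s-t)$ in $(\sigma, \pi)^{m_i-1}$, so that the claim on $(n-m+1)$-minors of $S_{p_\phi,q_\phi}$ reduces to a sharper minor-divisibility statement for the smaller matrix $\psi_1$, which I expect to admit a more direct combinatorial proof exploiting the shift structure of its columns and the pure $\sigma^{m_i-1}$ versus $\pi^{m_i-1}$ form of the leading parts of $\tilde{p}$ and $\tilde{q}$.
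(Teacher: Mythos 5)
Your coprimality argument is essentially the paper's: specializing the Sylvester matrix at a preimage of $Q$ gives corank $m_Q(\Cc)$, which forces $\val_{t_i}(d_k)=0$ for $k>m_Q(\Cc)$.

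The divisibility half, however, has an acknowledged gap that you never close. You want $\val_\pi(\alpha_{n-m+1})\ge m_i$ at each preimage $(t_i:1)$, observe correctly that the naive entry-wise valuation bound on the minors of $S_{p_\phi,q_\phi}$ is too weak, and then propose to exploit cancellations via the factorization through $\psi_1$ and a refinement ``which I expect to admit a more direct combinatorial proof.'' This expectation is not a proof; the shift-structure argument you gesture at is exactly the hard part, and there is no evidence in the proposal that it goes through. Moreover, the intermediate claim that $\tilde p,\tilde q\in(\sigma,\pi)^{m_i-1}$ is asserted without justification: dividing an element of $(\sigma,\pi)^{m_i}$ by $\sigma-\pi$ does not in general land you in $(\sigma,\pi)^{m_i-1}$ in $\CC[[\sigma,\pi]]$ unless you control the leading form, which you only sketch. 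As written, the divisibility claim is unproved.

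The paper avoids the local valuation analysis entirely. It works first with the Sylvester matrix $S_{p,q}(x_1,x_2,x_3)$ of $\sum x_ip_i$ and $\sum x_iq_i$ over $\CC[x_1,x_2,x_3]$, whose entries are linear forms, so the $(n-m+1)$-minors are homogeneous of fixed degree. Since $Q=(0{:}0{:}1)$ lies in $V\bigl(I_{n-m+1}(S_{p,q})\bigr)$, homogeneity forces $I_{n-m+1}(S_{p,q})\subset(x_1,x_2)$. Specializing $(x_1,x_2,x_3)\mapsto(a,b,c)$ then gives $I_{n-m+1}(S_{p,q}(a,b,c))\subset(a(t,u),b(t,u))\subset(H_Q(t,u))$ because $H_Q=\gcd(a,b)$ (by the proof of Proposition \ref{prop:inversionformula}). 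Hence $H_Q$ divides $d_{n-1}^{\,n-m}\cdots d_{m+1}^{\,2}d_m$, and combining with the coprimality just proved, $H_Q\mid d_m$. This one-line ideal-theoretic observation replaces the entire cancellation analysis you were trying to carry out; you may want to adopt it rather than attempt to complete the local bound.
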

\begin{proof}
Let  $S_{p,q}(x_1,x_2,x_3)$ be the Sylvester matrix of the polynomials $\sum_{i=1}^3 x_ip_i(s,v)$ and $\sum_{i=1}^3 x_iq_i(s,v)$ with respect to the homogeneous variables $(s,v)$. Its entries are linear forms in $\CC[x_1,x_2,x_3]$. Therefore, its determinantal ideals, denoted $I_k(-)$, $k=1,\ldots,n$, are homogeneous ideals in $\CC[x_1,x_2,x_3]$.  
	
Then, by using Proposition \ref{prop:inversionformula} we deduce that $$V(I_k(S_{p,q}(x_1,x_2,x_3)))=\emptyset \subset \PP^2_\CC$$ for all $k=1,\ldots,\mu$, as there cannot be any common factor of degree more than $n-\mu$ of these two forms after specializing the $x_i$ (see Remark \ref{rmm}). It follows then that $$V(I_k(S_{p,q}(a(t,u),b(t,u),c(t,u))))=\emptyset \subset \PP^1_\CC$$ for all $k=1,\ldots,\mu$, and this implies $d_k(t,u)=1$ for all $k>n-\mu$.
	
Now, assume for simplicity that $Q=(0:0:1)$  as both $H_Q(t,u)$ and $d_{m_Q(\Cc)}(t,u)$ are invariant under linear changes of coordinates in $\PP^2_\CC$, and set $m:=\deg(H_Q(t,u))$. As we did above,  we have $Q\notin V(I_k(S_{p,q}(x_1,x_2,x_3)))$ for all $k=1,\ldots,n-m$ which implies that $H_Q(t,u)$ and $d_k(t,u)$ are relatively prime polynomials for all $k>m$. On the other hand, $Q\in V(I_{n-m+1}(S_{p,q}(x_1,x_2,x_3)))$, that is $I_{n-m+1}(S_{p,q}(x_1,x_2,x_3))\subset (x_1,x_2)$, and hence
	$$I_{n-m+1}(S_{p,q}(a(t,u),b(t,u),c(t,u)))\subset (a(t,u),b(t,u))\subset(H_Q(t,u)) \subset \CC[t,u]$$
	It follows that $H_Q(t,u)$ divides ${d_{n-1}(t,u)}^{n-m}\cdots {d_{m+1}(t,u)}^2d_m(t,u).$ As it is coprime with ${d_{n-1}(t,u)}^{n-m}\cdots {d_{m+1}(t,u)}^2,$ 
we conclude that  $H_Q(t,u)$ divides $d_m(t,u)$.
\end{proof}
As a corollary we recover Theorem $5$ in \cite{Chen08}.
\begin{cor}\label{cor:ordinary} The curve $\Cc$ has no infinitely near singularities if and only if for all $k=2,\ldots,n-1$
	$$d_k(t,u)=\prod_{Q \in \mathrm{Sing_p}(\Cc) \textrm{ such that } m_Q(\Cc)=k} H_Q(t,u)$$
where $\mathrm{Sing_p}(\Cc)$ denotes the subset of $\mathrm{Sing}(\Cc)$ consisting exclusively of the proper singularities of the curve $\Cc$. 
	
In particular, if the curve $\Cc$ has only ordinary singularities then Theorem \ref{thm:main} holds.
\end{cor}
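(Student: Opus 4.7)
The plan is to prove the equivalence by comparing two different factorizations of $\Delta(t,u)$: one coming from Corollary \ref{cor:F1M} (in terms of the singular factors $d_k$) and the other from Theorem \ref{thm:conductor} (in terms of the multiplicity graph data).

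For the direction ``$\Rightarrow$'', I would start from Proposition \ref{prop:HdivD}: for each proper singular point $P_i$, the polynomial $H_{P_i}(t,u)$ divides $d_{m_{P_i}(\Cc)}(t,u)$, and $H_{P_i}(t,u)$ is coprime to $d_k(t,u)$ for $k > m_{P_i}(\Cc)$. Note also that for different proper points $P_i \neq P_{i'}$ the polynomials $H_{P_i}$ and $H_{P_{i'}}$ are coprime, since $\phi$ is birational and their supports are the distinct preimages. Setting $A_k(t,u) := \prod_{P_i \,:\, m_{P_i}(\Cc) = k} H_{P_i}(t,u)$, Proposition \ref{prop:HdivD} lets me write $d_k(t,u) = A_k(t,u)\,B_k(t,u)$ with $B_k \in \CC[t,u]$. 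Under the hypothesis that $\Cc$ has no infinitely near singularities, the exponent $\epsilon_{i,j}$ appearing in Theorem \ref{thm:conductor} collapses to $m^i_{j,0}(m_{P_i}(\Cc) - 1)$, so that
\[
\Delta(t,u) \;=\; \prod_{i} H_{P_i}(t,u)^{m_{P_i}(\Cc) - 1} \;=\; \prod_{k=2}^{n-\mu} A_k(t,u)^{k-1}.
\]
Combining this with $\Delta(t,u) = \prod_{k=2}^{n-\mu} d_k(t,u)^{k-1}$ from Corollary \ref{cor:F1M} yields $\prod_k B_k(t,u)^{k-1} = 1$ up to a nonzero constant, forcing each $B_k$ to be a constant and hence $d_k = A_k$ up to a constant, as desired.

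For the direction ``$\Leftarrow$'', I would run the same comparison in reverse. The hypothesis on $d_k$ yields $\Delta(t,u) = \prod_{i} H_{P_i}(t,u)^{m_{P_i}(\Cc) - 1}$ via Corollary \ref{cor:F1M}. Matching this against the formula of Theorem \ref{thm:conductor} (and noting that the linear factors $(u_{i,j}t - t_{i,j}u)$ coming from distinct branch-curves are pairwise coprime) one obtains, branch-curve by branch-curve, the identity $\sum_{h\geq 0} m^i_{j,h}(m_{P^i_{j,h}}(\Cc) - 1) = m^i_{j,0}(m_{P_i}(\Cc) - 1)$. Since the $h=0$ terms already agree, this forces $\sum_{h\geq 1} m^i_{j,h}(m_{P^i_{j,h}}(\Cc) - 1) = 0$, whence $m_{P^i_{j,h}}(\Cc) = 1$ for every $h \geq 1$; that is, $\Cc$ has no infinitely near singularities.

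Finally, for the ``in particular'' clause, I would just unwind the statement of Theorem \ref{thm:main} under the ordinary hypothesis: when no virtual point $P^i_{j,h}$ with $h>0$ is singular, the exponent $\epsilon^k_{i,j} = \sum_{h\,:\,m_{P^i_{j,h}}(\Cc) = k} m^i_{j,h}$ reduces to $m^i_{j,0}$ when $m_{P_i}(\Cc) = k$ and vanishes otherwise, and the Theorem \ref{thm:main} formula collapses precisely to $\prod_{P_i\,:\, m_{P_i}(\Cc)=k} H_{P_i}(t,u)$ by Proposition \ref{prop:inversionformula}. The only step requiring care is verifying that the coprimality statements of Proposition \ref{prop:HdivD} are strong enough to conclude $B_k$ is constant from the single equation $\prod_k B_k^{k-1} = 1$; since the $A_k$ are pairwise coprime (they are supported at points of different multiplicities) and each $d_k$ is a well-defined element of $\CC[t,u]$ (up to constant), this follows from unique factorization in $\CC[t,u]$.
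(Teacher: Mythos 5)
Your proof is correct and takes essentially the same route as the paper: both arguments rest on Proposition \ref{prop:HdivD} (to get $H_Q \mid d_{m_Q(\Cc)}$ and the coprimality statements), Corollary \ref{cor:F1M} (to identify $\prod_k d_k^{k-1}$ with $\Delta$), and Theorem \ref{thm:conductor} (to control $\Delta$ via the multiplicity graph). The paper compresses both directions into a single degree count on $\Delta$ versus $\prod_Q H_Q^{m_Q(\Cc)-1}$, leaving the final extraction of $d_k = A_k$ implicit, whereas you spell out the cofactor argument ($\prod_k B_k^{k-1}$ constant forces each $B_k$ constant) and the converse branch-by-branch nonnegativity argument; these are just explicit renderings of what the paper's degree comparison gives.
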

\begin{proof} By Proposition \ref{prop:HdivD}, $H_Q(t,u)$ divides $d_{m_Q}(t,u)$. Note that if $Q\neq Q'$, then the polynomials $H_Q(t,u)$ and $H_{Q'}(t,u)$ are coprime
as each of them provides an inversion formula for the parameterization around two different points. We deduce from here that
$\prod_{Q\in\mathrm{Sing_p}(\Cc):\,m_Q={k}} H_Q(t,u)$ divides $d_{k}(t,u),$ and hence that 
$\prod_{Q\in\mathrm{Sing_p}(\Cc):\,m_Q={k}} H_Q(t,u)^{m_Q-1}$ divides $d_{k}(t,u)^{k-1}.$
Finally, this implies that
	$$\prod_{Q \in \mathrm{Sing_p}(\Cc)} H_Q(t,u)^{m_Q(\Cc)-1} \textrm{ divides } {d_{n-\mu}(t,u)}^{n-\mu-1}{d_{n-\mu-1}(t,u)}^{n-\mu-2}\cdots d_2(t,u)$$
	and that these two polynomials are equal (up to a nonzero multiplicative constant) if and only if for all $k=2,\ldots,n-\mu$
		$$d_k(t,u)=\prod_{Q \in \mathrm{Sing_p}(\Cc) \textrm{ such that } m_Q(\Cc)=k} H_Q(t,u).$$
		
As the polynomial $H_Q(t,u)$ has degree $m_Q(\Cc)$, we deduce that
\begin{multline}\label{eq:degHQ}
\deg\left(  \prod_{Q \in \mathrm{Sing_p}(\Cc)} H_Q(t,u)^{m_Q(\Cc)-1}  \right)= 
\sum_{Q \in \mathrm{Sing_p}(\Cc)} m_Q(\Cc)(m_Q(\Cc)-1). 
\end{multline}
But by Corollary \ref{cor:F1M} 
	$${d_{n-\mu}(t,u)}^{n-\mu-1}{d_{n-\mu-1}(t,u)}^{n-\mu-2}\cdots d_2(t,u)=\Delta(t,u),$$
and by Theorem \ref{thm:conductor}, $\Delta(t,u)$ has degree equal to \eqref{eq:degHQ} if and only if $\mathrm{Sing_p}(\Cc)=\mathrm{Sing}(\Cc),$ i.e. if and only if all 
the singularities are ordinary.
\end{proof}

\section{Proof of the main theorem}\label{4}

To prove Theorem \ref{thm:main} we will proceed by induction on the minimal length of a resolution of $\Cc$. 
Define the following property for any integer $N\geq0$:

\medskip

\noindent $(H_N)$ : {\it Theorem \ref{thm:main} holds for any rational projective plane curve $\Cc$ whose singularities can be resolved after a sequence of $N$ blow-ups, assuming that \eqref{eq:sylvestermatrix} is built from a $\mu$-basis of a proper parameterization of $\Cc$.}

\medskip

Corollary \ref{cor:ordinary} implies that the property $(H_0)$ holds. 
Now, assume that $\Cc$ can be resolved by a sequence of $N$ blow-ups and that $(H_{N-1})$ holds. By hypothesis, there exists a sequence of rational projective plane curves
$$\Cc=\Cc_0 \leftarrow \tilde{\Cc}=\Cc_1 \leftarrow \Cc_2 \leftarrow \cdots \leftarrow \Cc_{N-1}\leftarrow \Cc_N$$
such that each arrow corresponds to a blow-up (quadratic transformation) at a singular point and $\Cc_N$ has only ordinary singularities. It is clear that $\tilde{\Cc}$ can be resolved by a sequence of $N-1$ blow-ups, so Theorem \ref{thm:main} holds for $\tilde{\Cc}$ by our inductive hypothesis.

The curve $\tilde{\Cc}$ is obtained by blowing-up $\Cc$ at a singular point $P$ of $\Cc$. To simplify the notation, we will hereafter denote by $m\geq 2$ the multiplicity of $P$ on $\Cc$, by $\zf_i$ the irreducible branch-curve of $\Cc$ such that $\zf_i(t_i)=P,\,i=1,\ldots,i_P$ and by $1\leq\nu_i\leq m$ the multiplicity of $P$ on $\zf_i$. We will also denote by 
\begin{itemize}
	\item $(P=P^i_0,P^i_1,\ldots)$ the sequence of points infinitely near to $P$ in the blow-up sequence of $\zf_i$,
	\item $(m=m^i_0,m^i_1,\ldots)$ the sequence of the corresponding multiplicities of the $P^i_j$ as points on $\Cc$,
	\item and by $(\nu_i=\nu^i_0,\nu^i_1,\ldots)$ the sequence of the corresponding multiplicities as points on $\zf_i$.
\end{itemize}
Given $f(t)\in \CC[t]$ and $a\in \CC$, the notation $\val_a \big(f(t)\big)$ stands for the valuation of $f(t)$ at $a$, that is to say the largest integer $k$ such that $(t-a)^k$ divides $f(t)$.

For $i \in \{1,\ldots, i_P\}$, let $\pp_i$ be the principal ideal in $\CC[t]$ generated by $t-t_i$. Consider the $R_{\pp_i}$-module $M_{\pp_i}$. It satisfies
$\F_\nu(M)R_{\pp_i}=\F_\nu(M_{\pp_i})$ for all $\nu\in \mathbb{N}$. 
From the proof of Proposition \ref{prop:HdivD}, we already know that $\F_0(M_{\pp_i})=0$ and that $\F_j(M_{\pp_i})=R_{\pp_i}$ for all $j\geq m$, so that 
$$\val_{t_i}(d_{n-1}(t))=\val_{t_i}( d_{n-2}(t))=\cdots=\val_{t_i}( d_{m+1}(t))=0$$
Moreover, from Proposition \ref{prop:F1M} and Theorem \ref{thm:conductor} we obtain that $\F_1(M_{\pp_i})$ is generated by $d_m(t)^{m-1}d_{m-1}(t)^{m-2}\ldots d_3(t)^2d_2(t)$ and 
\begin{equation}\label{eq:F1Mlocal}
	\val_{t_i}\left( d_m(t)^{m-1}d_{m-1}(t)^{m-2}\ldots d_3(t)^2d_2(t) \right)=
\sum_{k=2}^m \left( (k-1)\sum_{m^i_j=k} \nu^i_j \right).	
\end{equation}
Recall that $\tilde{\Cc}$ is obtained after blowing up a point in $\Cc$.
 We want to assume w.l.o.g. that 
the point being blown up is $(0:0:1)$, and that  the quadratic transformation is $X=X'$ and $Y=X'Y'$. 
In order to do this correctly,  we will perform a general change of coordinates 
of $\PP^1$ and of $\PP^2$ that will simplify the blow-up computations. Recall that the $R$-module $M$
is not affected by a change of coordinates of $\PP^2$ thanks to Lemma \ref{lem:chgtcoord1}, and Lemma  \ref{lem:chgtcoord2} shows that Theorem \ref{thm:main} can be proved w.l.o.g.~in any choice of coordinates of $\PP^1$. 
\par
After then a general change of coordinates in both spaces, we can assume w.l.o.g. that

\begin{itemize}
\item[(i)]  our singular point above is $P=(0:0:1)$,
\item[(ii)] the only singularity of $\Cc$ on the line $\{x_1=0\}$ is $P$,
\item[(iii)] the line $\{x_1=0\}$ is not tangent to $\Cc$ at $P$,
\item[(iv)] $\phi(1:0)$ is not a singular point of $\Cc$,
\item[(v)] $(0:1:0)\notin \Cc$ and $(1:0:0)\notin \Cc$, i.e.~$\gcd(a,c)=\gcd(b,c)=1$,
\item[(vi)] $\phi(1:0)\in\{x_3=0\} $, which essentially means that $\deg_t(c(t,u))<n$,
\item[(vii)] there are no singularities of $\Cc$ on the line $\{x_3=0\}.$
\end{itemize}

\medskip 

Now, since we assumed (i)-(iii), we apply the quadratic transformation
$X=X'$ and $Y=X'Y'$ so that the exceptional divisor corresponds to the line $X'=0$.
The curve ${\Cc}$ is then properly parameterized on affine coordinates as follows:
$$ \AA_\CC^1 \xrightarrow{{\phi}} \AA^2_\CC :  s_0 \mapsto \left(\frac{a(s_0,1)}{c(s_0,1)},\frac{b(s_0,1)}{c(s_0,1)}\right).$$
We write 
$$\begin{array}{ccc}
a(s,1)&=&h(s)\tilde{a}(s),\\
b(s,1)&=&h(s)\tilde{b}(s),
\end{array}
$$
with $\gcd(\tilde{a},\tilde{b})=1$. Note  that with the notation of Proposition \ref{prop:inversionformula}, we have
\begin{equation}\label{hH}
h(s)=H_P(s,1)=\lambda^*\,\prod_{i=1}^{i_P}(t-t_i)^{\nu_i}
\end{equation} with $\lambda^*$ a nonzero
constant in $\CC$. By (iv), we have $m=\sum_{i=1}^{i_P}\nu_i$, and by (v), $\,\tilde{\Cc}$ has degree $2n-\nu$ and is properly
parameterized by
$$\AA_\CC^1 \xrightarrow{\tilde{\phi}} \AA^2_\CC : s_0\mapsto \left(\frac{a(s_0,1)}{c(s_0,1)},\frac{\tilde{b}(s_0)}{\tilde{a}(s_0)}\right)=
\left(\frac{a(s_0,1)\tilde{a}(s_0)}{\tilde{a}(s_0)c(s_0,1)},\frac{c(s_0,1)\tilde{b}(s_0)}{\tilde{a}(s_0)c(s_0,1)}\right).
$$
From here, it is not hard to see that a $\mu$-basis associated to $\tilde{\phi}$ is given by
$$\begin{array}{ccc}
\tilde{p}&=&\tilde{a}^h(s,v)x_2-\tilde{b}^h(s,v)x_3,\\
\tilde{q}&=&c(s,v)x_1-a(s,v)x_3,
\end{array}
$$
with $\tilde{a}^h$ and $\tilde{b}^h$ being the homogenizations of $\tilde{a}$ and $\tilde{b}$ respectively
up to degree $n-\nu$.

Now, let $\tilde{M}$ be the $R$-module built from this $\mu$-basis, i.e. 
\begin{multline*}
\tilde{M}:= 
	\coker \ \Sylv_{(s,v)}\left(\tilde{a}^h(s,v)c(t,1)\tilde{b}(t)-\tilde{b}^h(s,v)\tilde{a}(t)c(t,1),\right. \\  c(s,v)a(t,1)\tilde{a}(t) -a(s,v)\tilde{a}(t)c(t,1)\Big).
\end{multline*}
Recall from (\ref{hH}) that $H_P(s,v)$ is the homogenization of $h(s)$. As we have
\begin{multline*}
	 \tilde{a}(t)^2\left(b(s,v)c(t,1)-c(s,v)b(t,1)\right) = - \tilde{b}(t)\left(c(s,v)a(t,1)\tilde{a}(t)-a(s,v)\tilde{a}(t)c(t,1)\right)\\
	+ \tilde{a}(t)H_P(s,v)\left(\tilde{b}^h(s,v)c(t,1)\tilde{a}(t)-\tilde{a}^h(s,v)c(t,1)\tilde{b}(t)\right), 
\end{multline*}
we deduce, after setting $v=1$, that for any prime $\pp$ of $\CC[t]$ such that $\tilde{a}(t)\notin\pp$, 
the multiplication by $ b(s,1)c(t,1)-b(t,1)c(s,1)$ in the quotient ring 
$$R_\pp[s]/\big(c(s,1)a(t,1)\tilde{a}(t)-a(s,1)\tilde{a}(t)c(t,1) \big)=
R_\pp[s]/\big(c(s,1)a(t,1)-a(s,1)c(t,1)\big)$$ 
decomposes as the multiplication by $h(s)$ times the multiplication by 
$$\left(\tilde{b}(s)c(t,1)\tilde{a}(t)-\tilde{a}(s)c(t,1)\tilde{b}(t)\right)$$
(notice that since $\tilde{a}(t)\notin \pp$, one can here cancel it out without changing the valuations of the above quantities).
The leading coefficient of $a(s,1)c(t,1)-a(t,1)c(s,1)$ as a polynomial in $s$ is equal to
$a_n c(t,1)$ by (vi), and we have $c(t_i,1)\neq0$ as otherwise we will have a singularity of $\Cc$
at $\{x_3=0\}$, a contradiction with (vii). 
So, we can use the following 
\begin{lem}[{\cite[\S 3.3]{AJ06}}]\label{lem:jou} Given a commutative ring $A$ and two polynomials $f(X),g(X)$ in $A[X]$ such that the 
leading coefficient of $f$ is a unit in $A$, then the cokernel of the Sylvester matrix of $f(X)$ and $g(X)$ is isomorphic, as an $A$-module, to the cokernel of the multiplication by $g(X)$ in the quotient ring $A[X]/(f(X))$.
\end{lem}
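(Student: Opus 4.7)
The plan is to identify $\coker(\Sylv)$ by factoring the Sylvester map through the quotient $A[X]/(f)$, using Euclidean division by $f$, which is valid over $A$ precisely because the leading coefficient of $f$ is assumed to be a unit.

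Let $d:=\deg f$ and $e:=\deg g$. Write $\sigma$ for the Sylvester map
\begin{equation*}
\sigma: A[X]_{<e}\oplus A[X]_{<d}\longrightarrow A[X]_{<d+e},\qquad (\alpha,\beta)\longmapsto \alpha f+\beta g,
\end{equation*}
so that $\coker(\Sylv(f,g))=\coker(\sigma)$. First I would establish the $A$-module isomorphism
\begin{equation*}
\pi: A[X]_{<d+e}\big/f\cdot A[X]_{<e}\xrightarrow{\ \sim\ } A[X]/(f)
\end{equation*}
by observing that, since the leading coefficient of $f$ is invertible in $A$, every $h\in A[X]$ admits a unique Euclidean division $h=fq+r$ with $\deg r<d$; restricting to $\deg h<d+e$ forces $\deg q<e$, which shows that $\pi$ is well-defined and bijective (its inverse sends the class of $r\in A[X]_{<d}$ to itself in $A[X]_{<d+e}$).

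Next, I would factor $\sigma$ as the composition of the inclusion of $f\cdot A[X]_{<e}$ into the image, followed by the map induced on the quotient. Explicitly, passing to the quotient by $f\cdot A[X]_{<e}$ and using $\pi$, the second summand $A[X]_{<d}\subset A[X]_{<d+e}$ maps onto the submodule $g\cdot(A[X]/(f))\subset A[X]/(f)$ through the composition $\beta\mapsto \beta g\mapsto \overline{\beta g}=\overline{\beta}\cdot\overline{g}$, where overlines denote residue classes modulo $f$. Because $\pi$ identifies $A[X]_{<d}$ with $A[X]/(f)$ (the standard basis $1,X,\dots,X^{d-1}$), the image of $A[X]_{<d}$ in $A[X]/(f)$ under multiplication by $g$ is exactly the image of the multiplication-by-$g$ endomorphism $\mu_g:A[X]/(f)\to A[X]/(f)$.

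Putting the two steps together I obtain the chain of $A$-module isomorphisms
\begin{equation*}
\coker(\sigma)\;=\;\frac{A[X]_{<d+e}}{f\cdot A[X]_{<e}+g\cdot A[X]_{<d}}\;\cong\;\frac{A[X]/(f)}{g\cdot (A[X]/(f))}\;=\;\coker(\mu_g),
\end{equation*}
which is the claim. The only delicate point is the existence and uniqueness of Euclidean division over the general commutative ring $A$, and this is precisely what the unit leading coefficient hypothesis buys us; everything else is a direct computation of cokernels.
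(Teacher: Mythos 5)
Your argument is correct, and it is the natural proof of this fact. The paper itself does not supply a proof here but simply cites Ap\'ery--Jouanolou \cite[\S 3.3]{AJ06}, so there is no in-text argument to compare against; your route (division with remainder by $f$, valid over an arbitrary commutative ring precisely because the leading coefficient of $f$ is a unit, then the third isomorphism theorem to pass from $A[X]_{<d+e}\big/\bigl(f\cdot A[X]_{<e}+g\cdot A[X]_{<d}\bigr)$ to $\bigl(A[X]/(f)\bigr)\big/g\cdot\bigl(A[X]/(f)\bigr)$) is exactly the argument one expects and the one the cited source uses. The two points worth making explicit if you wanted to tighten the write-up are (i) that $A[X]_{<d+e}\cap (f)=f\cdot A[X]_{<e}$, which follows because a unit leading coefficient makes $\deg(fq)=\deg f+\deg q$ for $q\neq 0$, and (ii) that $g\cdot A[X]_{<d}$ lands inside $A[X]_{<d+e}$ so that the quotient description of $\coker(\sigma)$ you wrote down is legitimate; both are immediate and you implicitly use them.
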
 
\noindent We deduce that for every prime $\pp_i=(t-t_i)$, the cokernel of the
localized Sylvester matrix $\tilde{M}$ is isomorphic to the cokernel of the product of the matrices associated to the  multiplication 
by $h(s)$ times the multiplication by {\small $\left(\tilde{b}(s)c(t,1)\tilde{a}(t)-\tilde{a}(s)c(t,1)\tilde{b}(t)\right)$}
in $R_{\pp_i}[s]/\big(c(s,1)a(t,1)-a(s,1)c(t,1)\big).$ 

\begin{lem} Let $\pp_i=(t-t_i)$ with $\phi(t_i:1)$ a singular point of $\Cc$. The invariant factors of the multiplication map by $h(s)$ in the quotient ring $R_{\pp_i}[s]/(a(s,1)c(t,1)-a(t,1)c(s,1))$ are
$$\alpha_1=1,\ldots,\alpha_{n-m}=1,\alpha_{n-m+1}=a(t,1),\alpha_{n-m_i+2}=a(t,1),\ldots,\alpha_{n}=a(t,1).$$
In particular, we have
$$\val_{t_i}(\alpha_k)=0,\ 1\leq k\leq n-m, \ \ \val_{t_i}(\alpha_k)=\nu_i, \ n-m+1\leq k\leq n.
$$
\end{lem}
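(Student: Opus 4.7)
The plan is to compute the cokernel of the multiplication-by-$h(s)$ endomorphism $m_h$ of the $R_{\pp_i}$-module $A:=R_{\pp_i}[s]/(f(s,t))$, where $f(s,t):=a(s,1)c(t,1)-a(t,1)c(s,1)$, and then read off the invariant factors directly from the resulting decomposition. The first step is to verify that $A$ is a free $R_{\pp_i}$-module of rank $n$: the leading coefficient of $f$ in the variable $s$ is $a(1,0)c(t,1)$, which lies in $R_{\pp_i}^\times$ because $a(1,0)\neq 0$ (a consequence of (v) and (vi)) and $c(t_i,1)\neq 0$ by (vii). Hence the invariant factors of $m_h$ coincide with those of the $R_{\pp_i}$-module $A/h(s)A$, padded with $1$'s to total length $n$.

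The decisive simplification is the identity $a(s,1)=h(s)\tilde a(s)$, which yields $f(s,t)\equiv -a(t,1)c(s,1)\pmod{h(s)}$; consequently
$$A/h(s)A\;\cong\;R_{\pp_i}[s]/\bigl(h(s),\,a(t,1)c(s,1)\bigr).$$
Next I would show that $c(s,1)$ is a unit modulo $h(s)$. By (vii), $c(t_j,1)\neq 0$ at each root $t_j$ of $h(s)$, so the image of $c(s,1)$ is a unit in the residue ring $\CC[s]/(h(s))$; the nonvanishing of the resultant of $c(s,1)$ and $h(s)$ then provides a B\'ezout identity $U(s)c(s,1)+V(s)h(s)=1$ in $\CC[s]$, which lifts verbatim to $R_{\pp_i}[s]$ and exhibits the desired inverse in $R_{\pp_i}[s]/(h(s))$. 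This collapses the ideal $(h(s),a(t,1)c(s,1))$ to $(h(s),a(t,1))$, and since $R_{\pp_i}[s]/(h(s))$ is free of rank $m=\deg(h)$ over $R_{\pp_i}$, we obtain
$$A/h(s)A\;\cong\;\bigoplus_{k=1}^{m}R_{\pp_i}\big/\bigl(a(t,1)\bigr).$$
Reading off the invariant factors now gives $\alpha_1=\cdots=\alpha_{n-m}=1$ and $\alpha_{n-m+1}=\cdots=\alpha_n=a(t,1)$, as claimed.

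Finally, the valuation formula is an immediate consequence of the geometric input (iii): the line $\{x_1=0\}$ is not tangent to $\Cc$ at $P$, so it is transverse to each irreducible branch $\zf_i$ centered at $P$; its intersection multiplicity with $\zf_i$ at $P$ therefore equals $\nu_i$. But this intersection multiplicity is exactly the order of vanishing of $a(s,1)$ at $s=t_i$, so renaming $s\to t$ gives $\val_{t_i}(a(t,1))=\nu_i$. Combined with the previous paragraph this yields $\val_{t_i}(\alpha_k)=0$ for $k\leq n-m$ and $\val_{t_i}(\alpha_k)=\nu_i$ for $k>n-m$.

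The step I expect to require the most care is the invertibility of $c(s,1)$ modulo $h(s)$ at the level of the local ring $R_{\pp_i}$ rather than merely over the residue field; the resultant-based B\'ezout lift is what handles this cleanly. A secondary point to watch is that (iii) must be applied in its sharp form — transversality to every individual branch at $P$, which a generic change of coordinates guarantees — in order to get the equality $\val_{t_i}(a(t,1))=\nu_i$ and not just the inequality $\geq\nu_i$.
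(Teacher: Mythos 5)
Your proof is correct and rests on the same two algebraic pivots the paper uses --- the factorization $a(s,1)=h(s)\tilde a(s)$, which collapses $f(s,t):=a(s,1)c(t,1)-a(t,1)c(s,1)$ to $-a(t,1)c(s,1)$ modulo $h(s)$, and the coprimality of $c(s,1)$ with $h(s)$ supplied by condition (vii). The difference is mostly one of packaging: the paper invokes Lemma \ref{lem:jou} to recast the multiplication map as a Sylvester matrix, reduces it by column operations to $\Sylv_s\big(a(t,1)c(s,1),h(s)\big)$, and reads off Fitting ideals by pulling $a(t,1)$ out of $m$ rows and noting the residual matrix is of maximal rank since $\Res_s(c(s,1),h(s))\neq 0$; you instead identify the cokernel $A/h(s)A$ directly with the quotient ring $R_{\pp_i}[s]/(h(s),a(t,1)c(s,1))=R_{\pp_i}[s]/(h(s),a(t,1))\cong\big(R_{\pp_i}/(a(t,1))\big)^{m}$, which delivers the invariant-factor decomposition without any matrix bookkeeping and makes the module structure of the cokernel transparent. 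Your treatment of the valuation claim is also slightly more careful than the paper's (which leaves it implicit after ``the claim follows straightforwardly''): you correctly note that condition (iii) must be read in its branchwise form --- $\{x_1=0\}$ transverse to each individual branch $\zf_i$ at $P$, which a generic linear change of coordinates does guarantee --- in order to upgrade the a priori inequality $\val_{t_i}(a(t,1))\geq\nu_i$ to the equality $\val_{t_i}(a(t,1))=\nu_i$, equivalently $\tilde a(t_i)\neq 0$.
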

\smallskip
\begin{proof}
As the leading coefficient of $a(s,1)c(t,1)-a(t,1)c(s,1)$ as a polynomial in $s$ is invertible $R_{\pp_i}[t]$,
then by using Lemma \ref{lem:jou}, it turns out that the cokernel of the multiplication map is actually
isomorphic to the cokernel of the following Sylvester matrix:
\begin{multline*}
	\coker\, \Sylv_s\left(a(s,1)c(t,1)-a(t,1)c(s,1),h(s)\right)\otimes R_{\pp_i}[t] \\
	\simeq\coker \Sylv_s\left(a(t,1)c(s,1),h(s)\right)\otimes R_{\pp_i}[t].	
\end{multline*}
The matrix $ \Sylv_s\left(a(t,1)c(s,1),h(s)\right)$ has $m=\deg(h)$ rows multiplied by $a(t,1)$, and
also has maximal rank as $\Res_s(c(s,1),h(s))\neq0$. This means that all the Fitting ideals of this matrix
are not zero, and moreover from $i=n+1$ to $n+h$ they are multiples of $a(t,1)^{i-1}$.
From here, the claim follows straightforwardly.
\end{proof}

Now we are ready for dealing with the inductive step and complete the proof of  the main Theorem. As we already know that the singular factors of $M$ are supported in
the singularities of $\phi$  by Proposition \ref{prop:F1M}, it is enough to prove the claim for localizations of the type $M_{\pp_0}$ with ${\pp_0}=(t-t_0),\,\phi(t_0:1)$ being a singular point of $\Cc$.
\par
Let us pick then $t_0\in\CC$ with this property. As $c(t_0,1)\neq0$ due to (vii) then, as we already used above, we have 
\begin{multline*}
	\coker\, \Sylv_s(a(s,1)c(t,1)-c(s,1)a(t,1),b(s,1)c(t,1)-c(s,1)b(t,1))\otimes
	R_{\pp_0}\\
	\simeq
	\coker\left(M_h\,M_{\tilde{b}(s)c(t,1)\tilde{a}(t)-\tilde{a}(s)c(t,1)\tilde{b}(t)}\right),
\end{multline*}
where $M_{*}$ is a matrix of the multiplication map in $R_{\pp_0}[s]/(a(s,1)c(t,1)-c(s,1)a(t,1))$.

\smallskip
If $h(t_0)\neq0$, then the character of $\phi(t_0:1)$ does not
change before and after the blow up. In addition, $M_h$ is an isomorphism and hence 
\begin{multline*}
	\coker\left(M_h\,M_{\tilde{b}(s)c(t,1)\tilde{a}(t)-\tilde{a}(s)c(t,1)\tilde{b}(t)}\right)\simeq
	\coker\left(M_{\tilde{b}(s)c(t,1)\tilde{a}(t)-\tilde{a}(s)c(t,1)\tilde{b}(t)}\right) \\
	\simeq\coker\left(\tilde{M}_{\pp_0}\right),
	\end{multline*}
$\tilde{M}$ being the matrix of the syzygies of $\tilde{\phi}$. Here, we apply the inductive
hypothesis and conclude.
\par
Suppose now that $h(t_0)=0$.  This means that $t_0\in\{t_1,\ldots, t_{i_P}\}$. Suppose w.l.o.g.
that $t_0=t_1$.

After localization, we denote 
the invariant factors of the matrix corresponding to the blow-up curve $\tilde{\Cc}$ with
\begin{multline*}
	\beta_1=1,\ldots,\beta_{d-m}=1,\beta_{d-m+1}=\tilde{d}_m,\beta_{d-m+2}=\tilde{d}_m\tilde{d}_{m-1},\ldots, \\ \beta_{d-1}=\tilde{d}_{m}\tilde{d}_{m-1}\ldots\tilde{d}_2,\beta_d=0	
\end{multline*}
and those of the matrix corresponding to $\Cc$ are set as
\begin{multline*}
	\gamma_1=1,\ldots,\gamma_{d-m}=1,\gamma_{d-m+1}=d_m,\gamma_{d-m+2}={d}_m{d}_{m-1},\ldots, \\ \gamma_{d-1}={d}_m{d}_{m-1}\ldots{d}_2,\gamma_d=0	
\end{multline*}
Applying Thompson's Theorem (Theorem \ref{thompson}), we deduce that, for all $1\leq i \leq m-1$
$$\alpha_1\ldots\alpha_{d-m+1}\beta_1\ldots\beta_{d-m}\beta_{d-m+i} \, | \, \gamma_1\ldots\gamma_{d-m}\gamma_{d-m+i},$$
that is to say
$$ (t-t_1)^{\nu_1}\tilde{d}_{m}\ldots\tilde{d}_{m-i+1}\, | \, d_m\ldots d_{m-i+1}$$
It follows that there exist non-negative integers $\epsilon_2,\ldots,\epsilon_{m-1}$ such that, for all $i=1,\ldots,m-1$, we have
$$\val_{t_1}(d_m\ldots d_{m-i+1})={\nu_1+\epsilon_{m-i+1}}+\val_{t_1}(\tilde{d}_{m}\ldots\tilde{d}_{m-i+1})$$
Therefore, we deduce that 
\begin{multline*}
	\val_{t_1}(d_m(t)^{m-1}d_{m-1}(t)^{m-2}\ldots d_3(t)^2d_2(t))= 
	\sum_{i=1}^{m-1}\val_{t_1}(d_m\ldots d_{m-i+1})= \\
	 {(m-1)\nu_1+\sum_{i=2}^m \epsilon_i}+ \val_{t_1}(\tilde{d}_m(t)^{m-1}\tilde{d}_{m-1}(t)^{m-2}\ldots\tilde{d}_3(t)^2\tilde{d}_2(t)).	
\end{multline*}
By \eqref{eq:F1Mlocal} we know that the left hand side of this equality is equal to 
$$\sum_{k=2}^m \left( (k-1)\sum_{m^1_i=k} \nu^1_i \right).$$
On the other hand, using our inductive hypothesis, the right hand side of this equality must be equal to 
$$\sum_{k=2}^m \left( (k-1)\sum_{m^1_i=k} \nu^1_i \right)+ \sum_i \epsilon_{m-i+d}.$$
Comparing the two above quantities, we deduce that 
$\sum_i \epsilon_{m-i+d}=0$
and therefore that all $\epsilon_i=0$ for all $i=2,\ldots,m$.
It follows that $d_m(t)=(t-t_1)^{\nu_1} \tilde{d_m}(t)$ and that $d_i(t)=\tilde{d_i}(t)$ for all $i=2,\ldots,m-1$. Therefore, we deduce that 
$(H_N)$ holds.



\section{Cokernels of resultant matrices}\label{5}
In this section, we show that the singular factors can be computed not only from the Sylvester matrix of the $\mu$-basis, but from a collection of matrices of smaller matrix known as the hybrid B\'ezout matrices. 

\medskip

Let $R$ be a commutative ring, $m,n\,\in\N$ with $n\geq m\geq 1,$ and 
$$\begin{array}{ccc}
f(t)&=&a_0+a_1t+\ldots+a_nt^n,\\
g(t)&=&b_0+b_1t+\ldots+b_mt^m
\end{array}
$$
polynomials in $R[t]$. 

For $k=0,\ldots, m-1$, set
$$\begin{array}{ccl}
f_k(t)&:=& a_nt^{n-m+k}+a_{n-1}t^{n-m+k-1}+\ldots+a_{m-k},\\
g_k(t)&:=&b_mt^{k}+b_{m-1}t^{k-1}+\ldots+b_{m-k},
\end{array}
$$
and define
$$
p_{k}(t):=g_k(t)f(t)-f_k(t)g(t).
$$
Note that as 
$$\begin{array}{ccl}
f(t)&=&f_k(t)t^{m-k}+a_{m-k-1}t^{m-k-1}+\ldots+a_1t+a_0\\ \\
g(t)&=&g_k(t)t^{m-k}+b_{m-k-1}t^{m-k-1}+\ldots+b_1t+b_0,
\end{array}
$$
then it turns out that $\deg(p_k(t))\leq n-1 \ \forall \, k=0,\ldots m-1$.

For  $j\in\{0,1,\ldots,m\}$, we consider the following map of $R$-modules of finite rank:
\begin{eqnarray*}
\psi_j: R^{j}\oplus R[t]_{\leq m-j-1}\oplus R[t]_{\leq n-j-1}& \to &R[t]_{\leq m+n-j-1} \\ 
\big(e_i,a(t),b(t)\big) & \mapsto & p_{m-j+i-1}(t)+a(t)f(t)+b(t)g(t)
\end{eqnarray*}
It is easy to see that $\psi_0$ is the Sylvester map of $(f,g)$. We will call $\psi_m$ the  \textit{hybrid B\'ezout} map
of these polynomials, and its matrix in the monomial bases is what we have referred to as the \textit{hybrid B\'ezout} matrix all along the text. If $n=m$, we just call them B\'ezout map and B\'ezout matrix respectively. For
$1\leq j\leq m-1, \, \psi_j$ it is also a \textit{hybrid} type map in the sense that it has a piece of Sylvester type and a piece of B\'ezout. 
\par In \cite[Proposition $18$]{AJ06} it is shown that if the leading coefficients of $f$ and $g$ generate $R$, then the cokernels of $\psi_0$ and $\psi_m$ are isomorphic. The following result is a generalization of this fact.

\smallskip
\begin{thm}\label{mt}
Suppose that there exists a nonzero divisor $d\in R,\,d\neq0$ such that $\langle a_{n},b_{m}\rangle =\langle d\rangle.$ Then the following sequence is exact
\begin{equation}\label{cond}
R/\langle d \rangle \to \mbox{coker}(\psi_{j+1})\to\mbox{coker}(\psi_j)\to R/\langle d\rangle \to0 \ \forall j=0,\ldots m-1.
\end{equation}
In particular, if $d=1$ we then have $\mbox{coker}(\psi_j)\cong\mbox{coker}(\psi_k)$ for all
$j,k$.
\end{thm}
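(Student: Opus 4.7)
The plan is to build a $3 \times 3$ commutative diagram with exact rows and apply the snake lemma. The bottom row would be the obvious short exact sequence $0 \to R[t]_{\leq m+n-j-2} \to R[t]_{\leq m+n-j-1} \to R \to 0$, whose last map extracts the top coefficient of a polynomial of degree at most $m+n-j-1$. For the top row I would introduce a map $\phi$ from $\mathrm{source}(\psi_{j+1})$ to $\mathrm{source}(\psi_j)$ defined by the inclusions on the polynomial summands $R[t]_{\leq m-j-2} \hookrightarrow R[t]_{\leq m-j-1}$ and $R[t]_{\leq n-j-2} \hookrightarrow R[t]_{\leq n-j-1}$, by $e_i \mapsto e_{i-1}$ for $i = 2,\ldots,j+1$, and by $e_1 \mapsto (0, g_{m-j-1}(t), -f_{m-j-1}(t))$ on the extra copy of $R$ present in the source of $\psi_{j+1}$. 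Commutativity of the resulting diagram boils down to the defining identity $p_{m-j-1} = g_{m-j-1} f - f_{m-j-1} g$, and injectivity of $\phi$ follows from the non-zerodivisor hypothesis: comparing top-degree coefficients on the polynomial components forces $r_1 a_n = r_1 b_m = 0$ for the $e_1$-coefficient $r_1$, whence $r_1 \in \mathrm{Ann}(\langle a_n, b_m \rangle) = \mathrm{Ann}(\langle d \rangle) = 0$.

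Applying the snake lemma would then produce the six-term exact sequence
$$\ker(h) \to \coker(\psi_{j+1}) \to \coker(\psi_j) \to \coker(h) \to 0,$$
where $h: \coker(\phi) \to R$ is the map induced on cokernels. A direct inspection of $\mathrm{Im}(\phi)$ gives $\coker(\phi) \cong R^2 / R \cdot (b_m, -a_n)$, with the two copies of $R$ recording the leading coefficients of elements in $R[t]_{\leq m-j-1}$ and $R[t]_{\leq n-j-1}$; moreover $h$ becomes the map $(\lambda, \mu) \mapsto \lambda a_n + \mu b_m$. Hence $\mathrm{Im}(h) = \langle a_n, b_m \rangle = \langle d \rangle$ and $\coker(h) = R/\langle d \rangle$, which already identifies the rightmost term in the statement.

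The hard part will be to identify $\ker(h)$ with $R/\langle d \rangle$. Writing $a_n = d \tilde a$, $b_m = d \tilde b$, and $d = \alpha a_n + \beta b_m$, the hypothesis that $d$ is a non-zerodivisor forces the B\'ezout-like relation $\alpha \tilde a + \beta \tilde b = 1$. A short computation then shows that any syzygy $\lambda a_n + \mu b_m = 0$ must satisfy $(\lambda, \mu) = (\beta \lambda - \alpha \mu)(\tilde b, -\tilde a)$, so the full syzygy module of $(a_n, b_m)$ is the cyclic submodule $R \cdot (\tilde b, -\tilde a)$; the same B\'ezout relation shows $(\tilde b, -\tilde a)$ has trivial annihilator. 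Therefore
$$\ker(h) \;=\; R \cdot (\tilde b, -\tilde a) \,\big/\, R \cdot d(\tilde b, -\tilde a) \;\cong\; R/\langle d \rangle,$$
and plugging this into the snake sequence yields the desired exact sequence. The final assertion for $d = 1$ follows immediately, since $R/\langle 1 \rangle = 0$ forces the middle arrow to be an isomorphism, and the general case is obtained by iterating over $j$.
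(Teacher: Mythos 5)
Your proof is correct and takes essentially the same route as the paper: the same map $\phi$ (the paper's $\alpha$), the same commutative square, and the same Snake Lemma application; the only differences are that you make explicit the injectivity of $\phi$, which the paper leaves implicit but needs for the snake sequence to start cleanly, and you identify $\ker(h)$ by a direct syzygy computation whereas the paper does the equivalent change of basis in $R^2$ using $uw+vz=1$. Both reductions are the same algebraic fact dressed differently, so this is a faithful reconstruction of the paper's argument.
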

\begin{rem}
Note that as $d\neq0$, then at least one between  $a_{n}$ and $b_{m}$ must be different from zero.
\end{rem}
\begin{proof} 
Fix $j\in\{1,\ldots m-1\}$ and consider the following commutative diagram of $R$-modules:
\begin{equation}\label{soto}
\begin{array}{ccc}
	&   &  0 \\ 
	&   &  \downarrow \\
R^{j+1}\oplus R[t]_{\leq m-j-2}\oplus R[t]_{\leq n-j-2} & \xrightarrow{\psi_{j+1}} & R[t]_{\leq m+n-j-2} \\
\\
\downarrow \alpha & & \downarrow {\bf i} \\
R^j\oplus R[t]_{\leq m-j-1}\oplus R[t]_{\leq n-j-1} & \xrightarrow{\psi_{j}} & R[t]_{\leq m+n-j-1} \\
\downarrow & & \downarrow \\
\mbox{coker}(\alpha) & \xrightarrow{\beta} &  \mbox{coker}({\bf i})\\
\downarrow & & \downarrow \\
0 & & 0
\end{array}
\end{equation}
where $\beta$ is the induced morphism of cokernels, ${\bf i}$ the canonical injection, and  $\alpha$ is defined as
\begin{equation}\label{alpha}
\begin{array}{lclc}
\alpha(0,a(t),b(t))&=&(0,a(t),b(t))& \\
\alpha(e_1,0,0)&=&(0,g_{m-j-1}(t),-f_{m-j-1}(t))& \\
\alpha(e_{i+1},0,0)&=&(e_i,0,0)& \ \mbox{for }\,i>1.
\end{array}
\end{equation}
Clearly, we have $\mbox{coker}({\bf i})\cong t^{m+n-j-1}\,R$. On the other hand, it is easy to see that
\begin{equation}\label{1:1}
\mbox{im}(\alpha)\cong R^j\oplus R[t]_{\leq m-j-2}\oplus R[t]_{\leq n-j-2}\oplus
(b_mt^{m-j-1},-a_nt^{n-j-1})R.
\end{equation}
Let $u,v,w,z\in R$ such that
$ua_n+vb_m=d,\,wd=a_n,\, zd=b_m$. As $d$ is not a zero divisor in $R$, then we have
$uw+vz=1$, and hence we have an isomorphism
$$(t^{m-j-1},0)R\oplus(0,t^{n-j-1})R\cong(ut^{m-j-1},vt^{n-j-1})R\oplus(zt^{m-j-1},-wt^{n-j-1})R,
$$
and from here, using (\ref{1:1}),
we then get
$$\mbox{coker}(\alpha)\cong (ut^{m-j-1},vt^{n-j-1})R\oplus \big(zt^{m-j-1},-wt^{n-j-1}\big)R/\langle d\rangle.
$$
With these identifications, it is straightforward to compute $\beta$ explicitly:
$$\beta\left(r_1(ut^{m-j-1},vt^{n-j-1})+[r_2]\big(zt^{m-j-1},-wt^{n-j-1}\big)
\right)=dr_1t^{m+n-j-1}
$$
for $r_1\in R,\, [r_2]\in R/\langle d \rangle.$ We deduce, then
\begin{equation}\label{beta}
\begin{array}{ccc}
\mbox{ker}(\beta)&\cong & R/\langle d\rangle, \\
\mbox{coker}(\beta)&\cong & R/\langle d\rangle.
\end{array}
\end{equation}
The claim now follows straightforwardly by applying the Snake Lemma to (\ref{soto}).
\end{proof}

The following lemma will imply that the cokernels of all the hybrid matrices of $\mu$-bases are
isomorphic and hence that the Fitting invariants of all type of resultant matrices are the same.
\begin{lem}\label{lemutil}
Let $p_\phi(s,1;t),\,q_\phi(s,1;t)\in\CC[s,t]$ be the specialization in $v=1$ of the polynomials defined in
(\ref{pq}). If $\phi(1:0)$ is not a singular point on $\Cc$, then the leading coefficients of these two polynomials with respect to $s$ are coprime in
$\CC[t]$.
\end{lem}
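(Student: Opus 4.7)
The plan is to read off the leading $s$-coefficients directly. Writing $p_i(s,v)=p_i(1,0)\,s^\mu+v\tilde p_i(s,v)$ and $q_i(s,v)=q_i(1,0)\,s^{n-\mu}+v\tilde q_i(s,v)$, the leading coefficients in $s$ of $p_\phi(s,1;t)$ and $q_\phi(s,1;t)$ are, respectively,
$$L_p(t)=\sum_{i=1}^3 p_i(1,0)\,\xi_i(t),\qquad L_q(t)=\sum_{i=1}^3 q_i(1,0)\,\xi_i(t),$$
where $(\xi_1,\xi_2,\xi_3)=(a(t,1),b(t,1),c(t,1))$. Set $\vec P:=(p_1(1,0),p_2(1,0),p_3(1,0))$ and $\vec Q:=(q_1(1,0),q_2(1,0),q_3(1,0))$ in $\CC^3$. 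Evaluating the syzygy identities $\sum_i p_i a_i=\sum_i q_i a_i=0$ at $(s,v)=(1,0)$ shows that both $\vec P$ and $\vec Q$ are orthogonal to $\vec\phi_0:=(a(1,0),b(1,0),c(1,0))$, so they lie in the plane $\Pi:=\vec\phi_0^{\,\perp}\subset\CC^3$.

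The key (and I expect the hardest) step is to show that $\vec P$ and $\vec Q$ are linearly independent, so that they span $\Pi$. If $\vec P=0$, every $p_i$ is divisible by $v$ and $p/v$ is a syzygy of degree $\mu-1$, contradicting the minimality of $\mu$. Otherwise, a relation $\vec Q=\lambda\vec P$ would make the syzygy $T:=q-\lambda s^{n-2\mu}p$ (of degree $n-\mu$) vanish at $(s,v)=(1,0)$, so $v\mid T$. Then $T/v$ is a syzygy of degree $n-\mu-1$; expanded on the basis $\{p,q\}$, its $q$-coefficient has degree $n-\mu-1-(n-\mu)=-1$ and therefore vanishes, forcing $T/v=A\,p$ for some $A\in\CC[s,v]_{n-2\mu-1}$ (with $A=0$ in the boundary case $n=2\mu$). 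Substituting back gives $q=(\lambda s^{n-2\mu}+vA)\,p$, contradicting that $\{p,q\}$ is a basis of the rank-$2$ syzygy module.

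With $\Pi$ spanned by $\vec P$ and $\vec Q$, the conclusion is immediate. Assume by contradiction that $\gcd(L_p,L_q)\neq 1$ and pick a common root $t_0\in\CC$. Then $(a(t_0,1),b(t_0,1),c(t_0,1))$ is orthogonal to both $\vec P$ and $\vec Q$, hence to all of $\Pi$, so it belongs to $\Pi^\perp=\CC\cdot\vec\phi_0$. Since $\gcd(a,b,c)=1$ this vector is nonzero, whence $\phi(t_0:1)=\phi(1:0)$ in $\PP^2$. But $\phi(1:0)$ is non-singular by hypothesis, so birationality of $\phi$ forces $\phi^{-1}(\phi(1:0))=\{(1:0)\}$, contradicting $(t_0:1)\neq(1:0)$.
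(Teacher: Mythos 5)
Your proof is correct and follows essentially the same route as the paper: extract the leading-coefficient linear forms $L_p,L_q$ (equivalently the vectors $\vec P,\vec Q$), show they are $\CC$-linearly independent, note that both annihilate $\phi(1:0)$, and conclude that a common root $t_0$ of $L_p(t),L_q(t)$ would force $\phi(t_0:1)=\phi(1:0)$, contradicting non-singularity of $\phi(1:0)$ via birationality. One remark: your proof of the linear independence of $\vec P$ and $\vec Q$ (separating out $\vec P=0$, dividing $T$ by $v$, and ruling out the $q$-component by degree count in the free module) is considerably more careful than the paper's one-line appeal to a ``reversible linear combination'' giving a syzygy of degree $<n-\mu$; as stated the paper's justification is not literally a contradiction (syzygies of degree $<n-\mu$ do exist, namely multiples of $p$), and the missing argument is exactly the degree count you supply.
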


\begin{proof}
Note that the leading coefficients of $\sum_{i=1}^3 p_i(s,1)x_i$ and $\sum_{i=1}^3 q_i(s,1)x_i$ are two linear forms, say $L_p(x_1,x_2,x_3)$ and $L_q(x_1,x_2,x_3)$ in $\CC[x_1,x_2,x_3]$ that are $\CC$-linearly independent, otherwise  by making a reversible linear combination of these elements, one could replace 
the $\mu$-basis $(p_\phi,q_\phi)$ with $(p_\phi,q'_\phi)$ with $\deg(q'_\phi)< n-\mu$, a
contradiction. 
\par Now, it is easy to see that these two linear forms intersect at $\phi(1:0)$, and as they are linearly independent, this is their only
point of intersection in $\PP^2$. From here
it follows that $L_p(a(t,1),b(t,1),c(t,1))$ and $L_q(a(t,1),b(t,1),c(t,1))$ are necessarily coprime in $\CC[t]$, otherwise
they will have a common factor $h(t)$ of positive degree, and we would have that
$$\phi(t_0:1)=\big(a(t_0,1):b(t_0,1):c(t_0:1)\big)=\phi(1:0)$$
for every $t_0$ such that $h(t_0)=0$, contradicting the fact that $\phi(1:0)$ is not a singularity of $\Cc$.
\end{proof}

\begin{cor} The singular factors of all the matrices $\psi_j\big(p_\phi(s,v;t,u),q_\phi(s,v;t,u)\big)$ in $\CC[t,u]$ are the same for any $j=0,\ldots, m,$ and  for any $\{p,q\}$ $\mu$-basis of $\phi$.
\end{cor}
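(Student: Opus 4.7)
The plan is to reduce the statement to an application of Theorem \ref{mt} after dehomogenizing and making a harmless change of coordinates on $\PP^1$. The hypothesis $\mu$-basis versus choice of $\mu$-basis is already handled by Lemma \ref{lem:chgtcoord1}: any change of $\mu$-basis corresponds to invertible $\CC$-linear combinations of the two rows of the polynomial pair, and hence to elementary transformations of each of the matrices $\psi_j(p_\phi,q_\phi)$, which preserve Fitting ideals and therefore singular factors. So I can fix one $\mu$-basis $\{p,q\}$ throughout.

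Next, apply a sufficiently general linear change of coordinates on $\PP^1$ so that $\phi(1:0)$ is not a singular point of $\Cc$; by Lemma \ref{lem:chgtcoord2} the singular factors in the new coordinates are the images of those in the old coordinates under the ``base change'' isomorphism $\psi^\sharp$, so it suffices to prove the equality of singular factors in the new coordinates. After this normalization, dehomogenize all matrices by setting $u=1$, and regard $p_\phi(s,1;t,1)$ and $q_\phi(s,1;t,1)$ as polynomials in $s$ with coefficients in $R:=\CC[t]$. By Lemma \ref{lemutil} their leading coefficients (in $s$) are coprime in $R$, so they generate the unit ideal, i.e.\ $d=1$ in the statement of Theorem \ref{mt}.

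Theorem \ref{mt} then gives $\coker(\psi_j)\simeq \coker(\psi_{j+1})$ as $R$-modules for every $j=0,\ldots,m-1$, and hence $\coker(\psi_j)\simeq\coker(\psi_k)$ for all $j,k\in\{0,\ldots,m\}$. Since isomorphic $R$-modules have the same Fitting ideals, the invariant factors of all the dehomogenized matrices $\psi_j(p_\phi(s,1;t,1),q_\phi(s,1;t,1))$ in $R=\CC[t]$ coincide. Re-homogenizing these invariant factors with respect to $u$ (exactly as in the definition of singular factors recalled in the introduction), one obtains the singular factors in $\CC[t,u]$, and the equality persists. Reversing the change of coordinates via Lemma \ref{lem:chgtcoord2} yields the claim in the original coordinates.

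The only mildly subtle point is the passage from ``isomorphism of dehomogenized cokernels over $\CC[t]$'' to ``equality of homogeneous singular factors in $\CC[t,u]$''; this is a matter of checking that the degrees of the columns/rows of $\psi_j$ are arranged so that the codimension-one part of the Fitting ideals, together with their correct $u$-homogenizations, depends only on the invariant factors over $\CC[t]$ (and not on the particular presentation), which is built into the definition of singular factors given above. I expect this bookkeeping to be the main, but entirely routine, obstacle.
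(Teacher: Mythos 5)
Your proposal is correct and follows essentially the same route as the paper: a change of coordinates on $\PP^1$ (via Lemma~\ref{lem:chgtcoord2}) to ensure $\phi(1:0)$ is not a singular point, then Lemma~\ref{lemutil} to get coprime leading coefficients, then Theorem~\ref{mt} with $d=1$ to conclude that the cokernels, and hence the Fitting ideals, coincide. The only thing the paper makes explicit that you elide slightly is that Lemmas~\ref{lem:chgtcoord1} and~\ref{lem:chgtcoord2}, as stated, concern the Sylvester matrix $\psi_0$, so one must observe (as the paper does) that the same proofs carry over verbatim to the other hybrid maps $\psi_j$; your last-paragraph ``bookkeeping'' remark gestures at this, and it is indeed routine.
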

\begin{proof}  If $\phi(1:0)$ is not a singular point on $\Cc$, then the result follows from Theorem \ref{mt} and Lemma \ref{lemutil}. If this is not the case, by applying a linear change of coordinates in $\PP^1$, we may assume that $\phi(1:0)$ is not a singular point on $\Cc$ and the result then follows from Lemma \ref{lem:chgtcoord2} which holds not only for the Sylvester matrix $\psi_0$, but also for all the matrices $\psi_j$, $j=0,\ldots,m$ (the same proof works verbatim).
\end{proof}

\section{On the invariant factors of the $D$-resultant matrix}\label{6}
In this section, we will describe the invariant factors of a matrix closely related to $S_{p_\phi,q_\phi}(t,u)$ that was originally studied in \cite{CS01} in order to compute the singularities of $\Cc$. As a consequence we obtain a complete factorization of the $D$-resultant for rational polynomials, introduced in \cite{GRY02}.

\smallskip
Let $B(x_1,x_2,x_3)\in\CC[x_1,x_2,x_3]^{n\times n}$ be the B\'ezout matrix associated to the polynomials
$a(s,v)x_3-c(s,v)x_1$ and $b(s,v)x_2-c(s,v)x_3$ with respect to the homogeneous variables $(s,v)$, and
$S(x_1,x_2,x_3)\in\CC[x_1,x_2,x_3]^{n\times n}$ be the Sylvester matrix associated to  $p(s,v)=\sum_{i=1}^3 x_ip_i(s,v)$ and $q(s,v)=\sum_{i=1}^3 x_iq_i(s,1)$ with respect to $(s,v)$. Here, as usual, $\{p,q\}$ is a
$\mu$-basis of $(a,b,c)$.

\begin{prop}\label{kk}
There exists an invertible $N\in\CC^{n\times n}$  such that 
\begin{equation}\label{level1}
B(x_1,x_2,x_3)= x_3\,N\,S(x_1,x_2,x_3).
\end{equation}
\end{prop}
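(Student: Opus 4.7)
The plan is to prove the identity $B = x_3 N S$ at the matrix level by using the Hilbert--Burch presentation of $(a,b,c)$ to factor the defining polynomials through the $\mu$-basis, translating this into an ideal membership of the B\'ezoutian, and then extracting the factor of $x_3$ from the specialization at $x_3 = 0$.

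Writing $f_1 := a x_3 - c x_1$ and $f_2 := b x_3 - c x_2$, I would first apply Hilbert--Burch to the $\mu$-basis $\{p,q\}$: up to a nonzero scalar, $a = p_2 q_3 - p_3 q_2$, $b = p_3 q_1 - p_1 q_3$, and $c = p_1 q_2 - p_2 q_1$. A direct calculation then gives
\[
\begin{pmatrix} f_1 \\ f_2 \end{pmatrix} = T(s,v) \begin{pmatrix} p \\ q \end{pmatrix}, \qquad T := \begin{pmatrix} -q_2 & p_2 \\ q_1 & -p_1 \end{pmatrix} \in \CC[s,v]^{2 \times 2},
\]
with $\det T = c$. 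Substituting this factorization into the numerator of the B\'ezoutian and regrouping yields
\[
f_1(s,v)f_2(t,u) - f_1(t,u) f_2(s,v) = p(s,v)\, U + q(s,v)\, V,
\]
for explicit polynomials $U,V \in \CC[s,v,t,u,x_1,x_2,x_3]$.

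I would next argue that $su-tv$ is a non-zerodivisor modulo $(p(s,v), q(s,v))$ in $\CC[s,v,t,u,x_1,x_2,x_3]$. Since $\{p,q\}$ is a $\mu$-basis we have $\gcd(p_1,p_2,p_3) = 1$, so $p$ is irreducible and $\{p,q\}$ is a regular sequence whose quotient is an integral domain in which $su-tv$ is nonzero. Division by $su-tv$ therefore preserves the ideal membership, so the B\'ezoutian itself lies in $(p(s,v),q(s,v))$. Reading off the coefficients in the $(t,u)$-variables, each column of $B$, viewed as an element of $\CC[s,v]_{n-1}\otimes\CC[x_1,x_2,x_3]$, lies in the degree-$(n-1)$ piece of $(p,q)$. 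By the defining property of a $\mu$-basis, this piece is exactly the $\CC[x_1,x_2,x_3]$-span of the columns of $S$, and we obtain a factorization $B = M\cdot S$ for some $M \in \CC[x_1,x_2,x_3]^{n\times n}$.

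Finally, setting $x_3 = 0$ makes $f_1 = -cx_1$ and $f_2 = -cx_2$ proportional, so $B|_{x_3 = 0} = 0$; moreover, $\det S = F(x_1,x_2,x_3)$, the implicit equation of $\Cc$, satisfies $F(x_1,x_2,0) \not\equiv 0$, hence $S|_{x_3=0}$ has maximal rank at generic points and $M|_{x_3=0} = 0$. This gives $M = x_3\,N$ with $N \in \CC[x_1,x_2,x_3]^{n\times n}$, and a degree comparison (entries of $B$ and $x_3 S$ are both of degree $2$ in the $x_i$'s, those of $S$ being linear) forces $N$ to be a constant matrix. Invertibility follows from the determinantal identity: combining $\det B = x_3^n \det N \cdot F$ with the Chionh--Sederberg implicitization formula $\det B = \lambda\, x_3^n F$ (for some nonzero $\lambda \in \CC$), one concludes $\det N = \lambda \neq 0$. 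The most delicate point will be the non-zerodivisor claim used above, which is what ensures $M$ has polynomial (and not merely localized) entries and underlies the whole argument.
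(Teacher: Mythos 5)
Your overall strategy -- factor $(f_1,f_2)$ through the $\mu$-basis via a Hilbert--Burch matrix, push the factorization into the Bezoutian numerator, and then read off the columns -- is attractive and quite different from the paper's, but it rests on a claim that is false. You assert that $\CC[s,v,t,u,x_1,x_2,x_3]/(p,q)$ is an integral domain and hence that $su-tv$ is a nonzerodivisor modulo $(p,q)$. Neither holds. Since the $p_i,q_i$ are homogeneous of positive degree in $(s,v)$, both $p$ and $q$ lie in the prime $(s,v)$; as $(p,q)$ is a height-two complete intersection (hence unmixed by Macaulay) and $(s,v)$ has height two and contains it, $(s,v)$ is a minimal, hence associated, prime of $(p,q)$. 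It is distinct from the Rees ideal $K=\ker\bigl(\CC[s,v,x]\to\CC[s,v,T],\,x_i\mapsto (\text{$i$-th param. component})\cdot T\bigr)$, because $K$ contains the implicit equation $\Res_{(s,v)}(p,q)\in\CC[x_1,x_2,x_3]\setminus(s,v)$. So $(p,q)$ has at least two minimal primes and the quotient is not a domain. Worse for your argument, $su-tv\in(s,v)$, so $su-tv$ is a genuine zerodivisor modulo $(p,q)$: there is a nonzero class $\bar r$ with annihilator $(s,v)$, and $(su-tv)\bar r=0$. The step ``divide by $su-tv$ and preserve membership in $(p,q)$'' therefore fails, precisely at the point you flagged as delicate. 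One can also see the failure concretely: setting $(t,u)=(s,v)$ in your explicit cofactor gives $U(s,v;s,v)=-q_2(s,v)f_2(s,v)-q_1(s,v)f_1(s,v)=\pm\,c(s,v)\,q(s,v;x)\neq 0$, so $su-tv$ does not divide $U$ (nor $V$) separately, and no termwise cancellation rescues the division.

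The paper's proof avoids this entirely by ordering the two reductions the other way. It first sets $x_3=0$ inside the Bezoutian to peel the factor $x_3$ off every entry $B_{i,j}$, and then observes that the rows of the residual matrix $A$ give degree-$(n-1)$ syzygies of $(a,b,c)$, i.e.\ elements that are \emph{linear} in $x_1,x_2,x_3$. At that degree the $\mu$-basis is, by definition, a free basis of the syzygy module, so the change-of-basis matrix $N$ is an honest constant invertible matrix with no ideal-theoretic subtlety. Your route instead tries to locate the full Bezoutian (degree two in the $x_i$) inside $(p,q)$, which beyond the zerodivisor problem also requires identifying the $(n-1,2)$-bigraded piece of the relevant ideal with $\CC[x]_1$-combinations of shifted $p,q$; that is not the defining property of a $\mu$-basis and would need its own argument. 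The remaining steps (specialize $x_3=0$ at the end, degree count for $N$ constant, comparison of determinants) are fine once $B=MS$ over $\CC[x_1,x_2,x_3]$ is established, but that is exactly the membership your argument does not deliver.
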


\begin{proof}
Set $B=\big(B_{i,j}(x_1,x_2,x_3)\big)_{0\leq i,j\leq n-1}$. We then have
$$\begin{array}{ccl}
	\sum_{i=0}^{n-1}\sum_{j=0}^{n-1}B_{i,j}(x_1,x_2,x_3)s^it^j &=& 
\frac{1}{s-t}\left(\big(a(s,1)x_3-c(s,1)x_1\big)\big(b(t,1)x_3-c(t,1)x_2\big)\right. \\ 
&&\left.-\big(a(t,1)x_3-c(t,1)x_1\big)
\big(b(s,1)x_3-c(s,1)x_2\big)\right).
\end{array}
$$
By setting $x_3=0$ above, it is easy to see that the right hand side vanishes, and hence we have $B_{i,j}(x_1,x_2,0)=0$ for all $i,j$. This shows that 
$$B_{i,j}(x_1,x_2,x_3)=x_3A_{i,j}(x_1,x_2,x_3) \ i,j=0,\ldots, n-1
$$ 
with $A_{i,j}(x_1,x_2,x_3)$ a homogeneous linear form.
If now we substitute $$x_1\mapsto a(s,1),\,x_2\mapsto b(s,1),\,x_3\mapsto c(s,1)$$ we again have that the whole
Bezoutian polynomial vanishes. So we conclude that
$$c(s,1)\sum_{i=0}^{n-1}\left(\sum_{j=0}^{n-1}A_{i,j}(a(s,1),b(s,1),c(s,1))s^i\right)t^j=0.
$$
As $c(s,1)\neq0$, this shows that for all $i=0,\ldots, n-1$,
$$L_i(s,v;x_1,x_2,x_3):=\sum_{j=0}^{n-1}A_{i,j}(x_1,x_2,x_3)s^iv^{n-1-i}$$ is a syzygy of $(a,b,c)$ of degree $n-1$. Moreover, the fact that $\det(B)\neq0$ (as we have assumed $\gcd(a,b,c)=1$)
shows then that $\det(A_{ij}(x_1,x_2,x_3))\neq0$ and this implies that the family
$\{L_0,\ldots, L_{n-1}\}$ is a basis of the $\CC$-vector space of syzygies of $(a,b,c)$ of degree $n-1$.
\par
On the other hand, it is easy to check that the family $$\{v^{n-\mu-1}p,v^{n-\mu-2}sp,\ldots, s^{n-\mu-1}p,v^{\mu-1}q,v^{\mu-2}sq,\ldots,s^{\mu-1}q\}$$ is another basis of the same $\CC$-vector space.  This is due to the fact that the matrix of coefficients of
this family with respect to the monomial basis is actually $S(x_1,x_2,x_3)$, whose determinant gives 
the implicit equation.
\par
So, as both sets are bases of the same space, we then get that there exists an invertible $N\in\CC^{n\times n}$ such that
$$\big(A_{i,j}(x,y,z)\big)=N\,S.
$$
From here, the proof follows straightforwardly.
\end{proof}
As a direct application of Proposition \ref{kk} we get the explicit description of the invariant
factors of the matrix $B_{F,G}(t,u)$ stated in the introduction.
Denote with $D_i(B_{F,G})$ the $\gcd$ of the $(n-i)$-minors of $B_{F,G}(t,u)$. 
\begin{thm}\label{cor:tt}
$D_0(B_{F,G})=0$ and for $i=1,\ldots,n-1$,
$$D_i(B_{F,G})=c(t,u)^{n-i}\,d_n(t,u)^{n-i} d_{n-1}(t,u)^{n-i-1}\ldots d_{i+1}(t,u).
$$
\end{thm}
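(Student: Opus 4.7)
The plan is to directly reduce the claim to the identity of Proposition \ref{kk} and to the definition of the singular factors.

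The first step is to observe that $B_{F,G}(t,u)$ is nothing but the specialization of the generic B\'ezout matrix $B(x_1,x_2,x_3)$. Indeed, substituting $x_1=a(t,u)$, $x_2=b(t,u)$, $x_3=c(t,u)$ into $a(s,v)x_3-c(s,v)x_1$ and $b(s,v)x_3-c(s,v)x_2$ yields exactly the pair $F(s,v;t,u)$ and $G(s,v;t,u)$ of \eqref{FG}, so bilinearity of the B\'ezoutian construction gives $B_{F,G}(t,u)=B\bigl(a(t,u),b(t,u),c(t,u)\bigr)$. The same substitution applied to $S(x_1,x_2,x_3)$, which is built from $\sum_i x_ip_i$ and $\sum_i x_iq_i$, produces exactly the Sylvester matrix $S_{p_\phi,q_\phi}(t,u)$ of \eqref{eq:sylvestermatrix}.

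The second step is to specialize the identity \eqref{level1} of Proposition \ref{kk}. After the substitution just described we obtain
\begin{equation*}
	B_{F,G}(t,u)=c(t,u)\,N\,S_{p_\phi,q_\phi}(t,u),
\end{equation*}
with $N\in \CC^{n\times n}$ invertible and independent of $(t,u)$. Since $N$ is invertible in the ring of coefficients, left-multiplication by $N$ preserves the ideals generated by the $k$-minors of any size $k$. Multiplying the whole matrix by the scalar $c(t,u)\in\CC[t,u]$ multiplies each $k$-minor by $c(t,u)^k$. Taking $k=n-i$ therefore yields
\begin{equation*}
	D_i(B_{F,G})=c(t,u)^{n-i}\cdot \gcd\Bigl(\text{$(n-i)$-minors of } S_{p_\phi,q_\phi}(t,u)\Bigr).
\end{equation*}

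The third step is to rewrite the second factor using the definition of the singular factors. By definition, the gcd of the $(n+1-j)$-minors of $S_{p_\phi,q_\phi}(t,u)$ equals $d_n^{n-j+1}d_{n-1}^{n-j}\cdots d_{j+1}^{2}d_{j}$. Setting $n+1-j=n-i$, i.e.\ $j=i+1$, gives
\begin{equation*}
	\gcd\Bigl(\text{$(n-i)$-minors of } S_{p_\phi,q_\phi}(t,u)\Bigr)=d_n(t,u)^{n-i}d_{n-1}(t,u)^{n-i-1}\cdots d_{i+1}(t,u),
\end{equation*}
which combined with the previous display yields the announced formula for $i\geq 1$. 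For $i=0$, note that $su-tv$ divides both $F$ and $G$, so $B_{F,G}(t,u)$ is singular as an $n\times n$ matrix over $\CC(t,u)$; equivalently, either use the same identity together with $\det S_{p_\phi,q_\phi}=0$ (which already appears in the proof of Proposition \ref{prop:F1M}), giving $D_0(B_{F,G})=0$.

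There is essentially no obstacle beyond verifying the bookkeeping: the only point that requires care is that one must make sure the statement of Proposition \ref{kk} is used exactly in the form where the second generator is $b(s,v)x_3-c(s,v)x_2$ (this is the form actually employed in its proof), because only then does the substitution $(x_1,x_2,x_3)\mapsto(a(t,u),b(t,u),c(t,u))$ recover $F$ and $G$ as defined in \eqref{FG}.
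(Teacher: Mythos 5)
Your proof is correct and follows the same route the paper takes -- specialize \eqref{level1} at $(x_1,x_2,x_3)=(a(t,u),b(t,u),c(t,u))$ and compare determinantal ideals -- merely spelling out the bookkeeping that the paper leaves implicit in its one-line proof. Your closing remark about the second generator being $b(s,v)x_3-c(s,v)x_2$ (as in the proof of Proposition \ref{kk}) rather than $b(s,v)x_2-c(s,v)x_3$ (as in its statement) correctly identifies a typo in the paper and is needed for the specialization to produce $G$ as defined in \eqref{FG}.
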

\begin{proof}
Set $x_1\mapsto a(t,u),\,x_2\mapsto b(t,u),\, x_3\mapsto c(t,u)$ in (\ref{level1}) and compute the invariant factors on both sides.
\end{proof}

\medskip

As an immediate consequence, we also get the following
\begin{thm}[Factorization of the $D$-resultant, case of same denominator]\label{thm:dres} 
\begin{equation}\label{factoriz}
\tilde{\Delta}(t,u)=c(t,u)^{n-1}\,d_n(t,u)^{n-1} d_{n-1}(t,u)^{n-2}\ldots d_{2}(t,u).
\end{equation}
\end{thm}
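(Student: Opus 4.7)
My plan is to identify $\tilde{\Delta}(t,u)$ with the first non-trivial Fitting invariant $D_1(B_{F,G})$; once this identification is established, the announced factorization of $\tilde{\Delta}(t,u)$ follows immediately from Theorem \ref{cor:tt}.

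To this end, I would first write $h(s,v):= su-tv$ and set $\tilde{F}:=F/h$, $\tilde{G}:=G/h$, which are homogeneous of degree $n-1$ in $(s,v)$. Starting from the definition of the Bezoutian, one derives the multiplicative formula
$$\mathrm{Bez}(F,G)(s,v;s',v') \;=\; h(s,v)\,h(s',v')\,\mathrm{Bez}(\tilde{F},\tilde{G})(s,v;s',v').$$
Expanding both sides in the monomial bases that define $B_{F,G}$ and $B_{\tilde F,\tilde G}$, this identity translates into the matrix factorization
$$B_{F,G}(t,u) \;=\; M_h(t,u)\cdot B_{\tilde F,\tilde G}(t,u)\cdot M_h(t,u)^T,$$
where $M_h(t,u)\in \CC[t,u]^{n\times(n-1)}$ is the matrix of multiplication by $h$ from $\CC[s,v]_{n-2}$ to $\CC[s,v]_{n-1}$ in the monomial bases; its only nonzero entries are $-t$ on the main diagonal and $u$ on the subdiagonal.

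Next, for any choice of $n-1$ rows $I$ and $n-1$ columns $J$, the corresponding $(n-1)\times(n-1)$ submatrix of $B_{F,G}$ reads
$$(B_{F,G})_{I,J} \;=\; (M_h)_{I,\ast}\cdot B_{\tilde F,\tilde G}\cdot \bigl((M_h)_{J,\ast}\bigr)^T,$$
a product of three square matrices of size $n-1$, so that
$$\det\bigl((B_{F,G})_{I,J}\bigr) \;=\; \det\bigl((M_h)_{I,\ast}\bigr)\cdot \det\bigl(B_{\tilde F,\tilde G}\bigr)\cdot \det\bigl((M_h)_{J,\ast}\bigr).$$
The central factor equals, up to sign, $\Res_{(s,v)}(\tilde F, \tilde G)=\tilde{\Delta}(t,u)$ since both $\tilde F$ and $\tilde G$ have the same degree $n-1$, and a direct inspection of the bidiagonal matrix $M_h$ shows that its $(n-1)$-minors are $\{\pm t^i u^{n-1-i}\}_{0\leq i\leq n-1}$.

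The monomials $\{t^i u^{n-1-i}\}_{0\leq i\leq n-1}$ being coprime in $\CC[t,u]$, the elementary identity $\gcd_{I,J}(\alpha_I\beta_J\gamma)=\gamma\cdot\gcd_I(\alpha_I)\cdot\gcd_J(\beta_J)$ yields $D_1(B_{F,G})=\tilde{\Delta}(t,u)$. Substituting into Theorem \ref{cor:tt} gives the claimed formula. The delicate point is to verify the bivariate homogeneous Bezoutian identity and its translation into the matrix factorization $B_{F,G}=M_h B_{\tilde F,\tilde G} M_h^T$; once that is in place, the rest is a routine determinantal computation.
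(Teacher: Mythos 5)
Your argument is correct, and it takes a genuinely more self-contained route to the key identification $D_1(B_{F,G})=\tilde{\Delta}(t,u)$ than the paper does. The paper's proof is terse: it recalls that $\tilde{\Delta}$ is the first principal subresultant of $F,G$, then sends the reader to Theorem \ref{cor:tt} (which rests on the factorization $B(x_1,x_2,x_3)=x_3NS(x_1,x_2,x_3)$ of Proposition \ref{kk}) and to Proposition \ref{prop:F1M} (whose proof establishes the analogous gcd-of-minors statement for $S_{p_\phi,q_\phi}$ by factoring the Sylvester map through multiplication by $s-tv$). In effect, the identification of $\tilde{\Delta}$ with the gcd of \emph{all} $(n-1)$-minors of $B_{F,G}$ is left implicit. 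Your approach replaces this by the explicit bivariate Bezoutian identity $\mathrm{Bez}(F,G)=h(s,v)\,h(s',v')\,\mathrm{Bez}(\tilde F,\tilde G)$, which — once translated into the monomial bases — yields the clean matrix factorization $B_{F,G}=M_h\,B_{\tilde F,\tilde G}\,M_h^T$, with $M_h$ the bidiagonal $n\times(n-1)$ multiplication-by-$(su-tv)$ matrix whose $(n-1)$-minors are the pairwise coprime monomials $\pm t^iu^{n-1-i}$. From there, a Cauchy--Binet-type computation and the gcd identity in a UFD give $D_1(B_{F,G})=\det(B_{\tilde F,\tilde G})=\Res(\tilde F,\tilde G)=\tilde{\Delta}$ directly, and Theorem \ref{cor:tt} finishes. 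Both proofs are ultimately instances of the same principle (a resultant matrix of two polynomials with a common degree-one factor factors through multiplication by that factor), but you apply it directly to the Bezout matrix of $(F,G)$ rather than detouring through the Sylvester matrix of the $\mu$-basis and the subresultant formalism; this makes the proof of $D_1(B_{F,G})=\tilde\Delta$ fully explicit, which is a real gain in clarity. Two minor remarks: the identity $\gcd_{I,J}(\alpha_I\beta_J\gamma)=\gamma\gcd_I(\alpha_I)\gcd_J(\beta_J)$ should be stated with the caveat that it holds in a GCD domain (as $\CC[t,u]$ is), and the equalities $\det(B_{\tilde F,\tilde G})=\Res(\tilde F,\tilde G)$ and the minors of $M_h$ hold only up to sign, but since the whole paper works up to nonzero constants this causes no trouble.
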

\begin{proof}
Recall that $\Res_{(s,v)}\left(\frac{F(s,v;t,u)}{su-tv},\frac{G(s,v;t,u)}{su-tv}\right)$ is equal to the first subresultant of the pair $F(s,v;t,u),\,G(s,v;t,u)$. Set $i=1$ in Theorem \ref{cor:tt} and use Proposition \ref{prop:F1M}.
\end{proof}

Actually, the $D$-resultant  in \cite{GRY02} was defined for an affine parameterization of the
form $\big(\frac{A(t)}{C(t)},\frac{B(t)}{D(t)}\big)$ with $\gcd(A,C)=\gcd(B,D)=1.$ In order to tackle this situation, set 
$$ n_1:=\max\{\deg(A),\,\deg(C)\},\ n_2:=\max\{\deg(B),\,\deg(D)\}.
$$
Let $\tilde{A}(s,v),\,\tilde{C}(s,v)$ (resp. $\tilde{B}(s,v),\,\tilde{D}(s,v)$) be the homogenizations of
$A$ and $C$ (resp. $B$ and $D$) to degree $n_1$ (resp. $n_2$). The $D$-resultant of the curve given by this parameterization is defined in \cite{GRY02} as
\begin{multline}\label{tilded}
\tilde{\Delta}_{\tilde{A},\tilde{C},\tilde{B},\tilde{D}}(t,u):= \\
\Res_{(s,v)}\left(
\frac{\tilde{A}(s,v)\tilde{C}(t,u)-\tilde{A}(t,u)\tilde{C}(s,v)}{su-tv},
\frac{\tilde{B}(s,v)\tilde{D}(t,u)-\tilde{B}(t,u)\tilde{D}(s,v)}{su-tv}
\right).
\end{multline}
Denote with $\tilde{\Cc}\subset\PP^2$ the curve defined by the closure of the image of the
parameterization  given by $\big(\frac{A(t_0)}{C(t_0)},\frac{B(t_0)}{D(t_0)}\big)$, with
$t_0\in\CC$. We assume that this parameterization is proper, and hence $\tilde{\Cc}$ is birationally parametrized by
$$
\begin{array}{cccc}
\nu:&\PP^1&\to&\PP^2\\
&(s_0:v_0)&\mapsto&\big(\tilde{a}(s_0:v_0):\tilde{b}(s_0:v_0):\tilde{c}(s_0:v_0)\big),
\end{array}
$$
with $\tilde{c}(s,v)$ being  the least common multiple
of $\tilde{C}(s,v)$ and $\tilde{D}(s,v)$; $\tilde{a}(s,v):=\frac{\tilde{A}(s,v)\tilde{c}(s,v)}{\tilde{C}(s,v)}$ and  $\tilde{b}(s,v):=\frac{\tilde{B}(s,v)\tilde{c}(s,v)}{\tilde{D}(s,v)}$. The polynomials $\tilde{a},\,\tilde{b},\,\tilde{c}$ have then the same degree $n\geq\max\{n_1,n_2\}$, and no common factors. Hence, the degree of $\tilde{\Cc}$ is then $n$ and we have
$$\tilde{c}(t,u)=h(t,u)\tilde{C}(t,u)=q(t,u)\tilde{D}(t,u),$$ with $h(t,u)$ and $q(t,u)$ coprimes. We also get $\tilde{a}(t,u)=h(t,u)\tilde{A}(t,u)$ and $\tilde{b}(t,u)=q(t,u)\tilde{B}(t,u)$, and $\gcd(\tilde{a}(t,u),\tilde{b}(t,u),\tilde{c}(t,u))=1.$ 

We will denote with  $\Delta_\nu(t,v)$ the polynomial defined in (\ref{eq:delta}) associated with the
parameterization $\nu$. A complete factorization of this polynomial in terms of the singularities of $\tilde{\Cc}$ and its multiplicity graph is given in Corollary \ref{cor:F1M}.

\par
Finally, let $\delta(t,u):=\gcd(B(t,u),D(t,u))$. Note that we have
$$\tilde{C}(t,u)=q(t,u)\delta(t,u),\ \ \tilde{D}(t,u)=h(t,u)\delta(t,u).
$$

\begin{thm}[Factorization of the $D$-resultant, case of different denominators]\label{thm:diffd}
If the parameterization defined by
$\big(\frac{A(t)}{C(t)},\frac{B(t)}{D(t)}\big)$ is proper, then with the notation established above, we have
\begin{equation}\label{factorizz}
{h(t,u)}^{\deg(h)-1}{q(t,u)}^{\deg(q)-1}\tilde{\Delta}_{\tilde{A},\tilde{C},\tilde{B},\tilde{D}}(t,u)={\delta(t,u)}^{\deg(\delta)-1}\Delta_{\nu}(t,u).
\end{equation}
\end{thm}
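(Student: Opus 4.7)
The plan is to compare the $D$-resultant $\tilde{\Delta}_{\tilde{A},\tilde{C},\tilde{B},\tilde{D}}(t,u)$ with the $D$-resultant $\tilde{\Delta}_\nu(t,u)$ associated to the common-denominator parametrization $\nu$, for which Theorem~\ref{thm:dres} already gives the complete factorization $\tilde{\Delta}_\nu = \tilde{c}(t,u)^{n-1}\Delta_\nu(t,u)$. The link between the two is obtained from multiplicativity of the Sylvester resultant in $(s,v)$ and Poisson's product formula applied at the zeros of $h$ and of $q$.

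First, I would use the factorizations $\tilde{a}=h\tilde{A}$, $\tilde{c}=h\tilde{C}=q\tilde{D}$ and $\tilde{b}=q\tilde{B}$ to obtain
\[
\tilde{a}(s,v)\tilde{c}(t,u)-\tilde{a}(t,u)\tilde{c}(s,v) = h(s,v)h(t,u)\bigl(\tilde{A}(s,v)\tilde{C}(t,u)-\tilde{A}(t,u)\tilde{C}(s,v)\bigr),
\]
and symmetrically the second polynomial defining $\tilde{\Delta}_\nu$ factors as $q(s,v)q(t,u)$ times $\tilde{B}(s,v)\tilde{D}(t,u)-\tilde{B}(t,u)\tilde{D}(s,v)$. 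Dividing by $su-tv$ and denoting by $\tilde{F},\tilde{G}$ the two polynomials whose $(s,v)$-resultant is $\tilde{\Delta}_{\tilde{A},\tilde{C},\tilde{B},\tilde{D}}$, the multiplicativity of the Sylvester resultant in $(s,v)$, together with the fact that $h(t,u),q(t,u)$ act as scalars with respect to the elimination variables, yields
\[
\tilde{\Delta}_\nu = h(t,u)^{n-1}q(t,u)^{n-1}\Res(h,q)\Res(h,\tilde{G})\Res(\tilde{F},q)\tilde{\Delta}_{\tilde{A},\tilde{C},\tilde{B},\tilde{D}}.
\]

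The next step is to evaluate the mixed resultants via Poisson's formula. Since $h\mid\tilde{D}$, at any projective root $(s_i,v_i)$ of $h$ one has $\tilde{G}(s_i,v_i;t,u)=\tilde{B}(s_i,v_i)\tilde{D}(t,u)/(s_iu-v_it)$. Using $\tilde{D}=h\delta$ to extract $h(t,u)^{\deg h}\delta(t,u)^{\deg h}$ from the product, and observing that $\prod_i(s_iu-v_it)$ coincides with $h(t,u)$ up to a nonzero constant, one gets, up to a nonzero constant,
\[
\Res(h,\tilde{G}) = \Res(h,\tilde{B})h(t,u)^{\deg h - 1}\delta(t,u)^{\deg h},
\]
and by the symmetric computation $\Res(\tilde{F},q)=\Res(q,\tilde{A})q(t,u)^{\deg q-1}\delta(t,u)^{\deg q}$. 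The constants $\Res(h,q)$, $\Res(h,\tilde{B})$ and $\Res(q,\tilde{A})$ are all nonzero: $\gcd(h,q)=1$ by construction, while $\gcd(\tilde{A},\tilde{C})=\gcd(\tilde{B},\tilde{D})=1$ follow from $\gcd(A,C)=\gcd(B,D)=1$ together with the fact that no spurious factor of $v$ arises in the homogenizations, thanks to the choice $n_1=\max(\deg A,\deg C)$ and $n_2=\max(\deg B,\deg D)$.

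Finally, I would substitute the Poisson evaluations and the identity $\tilde{\Delta}_\nu = \tilde{c}(t,u)^{n-1}\Delta_\nu = h(t,u)^{n-1}q(t,u)^{n-1}\delta(t,u)^{n-1}\Delta_\nu$ into the multiplicativity formula above. The factors $h(t,u)^{n-1}q(t,u)^{n-1}$ cancel, and the arithmetic relation $(n-1)-\deg h-\deg q = \deg\delta-1$ produces exactly \eqref{factorizz}. The main delicate point is the Poisson calculation: correctly accounting for the powers of $h(t,u),q(t,u),\delta(t,u)$ that come from $\tilde{D}(t,u)^{\deg h}$ and from $\prod_i(s_iu-v_it)$ is the bulk of the bookkeeping, but once this is handled the identity follows formally from Theorem~\ref{thm:dres}.
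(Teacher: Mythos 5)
Your proposal is correct and follows essentially the same route as the paper: apply Theorem~\ref{thm:dres} to the common-denominator parameterization $\nu$, factor the numerator polynomials via $\tilde{a}=h\tilde{A}$, $\tilde{b}=q\tilde{B}$, $\tilde{c}=h\tilde{C}=q\tilde{D}=hq\delta$, expand by multiplicativity of the resultant in $(s,v)$, evaluate the mixed resultants $\Res(h,\tilde{G})$ and $\Res(\tilde{F},q)$ by Poisson, and then substitute and cancel. The only cosmetic difference is in the Poisson step: you evaluate $\tilde{G}$ directly at the roots of $h$ using $h\mid\tilde{D}$, whereas the paper first performs the algebraic decomposition $\tilde{G}=\delta(t,u)\tilde{B}(s,v)\frac{h(t,u)-h(s,v)}{su-tv}+h(s,v)(\cdots)$ to discard the $h(s,v)$-divisible part before applying Poisson to $\Res(h,\frac{h(t,u)-h(s,v)}{su-tv})$; both yield $\delta(t,u)^{\deg h}h(t,u)^{\deg h-1}$ up to a nonzero constant.
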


\begin{rem}	Note that (\ref{factorizz}) generalizes (\ref{factoriz}), as in the case of common denominators we have $h(t,u)=q(t,u)=1$ and $\delta(t,u)=\tilde{c}(t,u)$.
\end{rem}

\begin{proof}
We apply Theorem \ref{thm:dres}
to the parameterization given by  $(\tilde{a}:\tilde{b}:\tilde{c})$ and have
\begin{equation}\label{eq:ddd}
\tilde{\Delta}_{\tilde{a},\tilde{b},\tilde{c}}(t,u)=\tilde{c}(t,u)^{n-1}\Delta_{\nu}(t,u),
\end{equation}
where $\tilde{\Delta}_{\tilde{a},\tilde{b},\tilde{c}}(t,u):=\Res_{(s,v)}\left(
\frac{\tilde{a}(s,v)\tilde{c}(t,u)-\tilde{a}(t,u)\tilde{c}(s,v)}{su-tv},
\frac{\tilde{b}(s,v)\tilde{c}(t,u)-\tilde{b}(t,u)\tilde{c}(s,v)}{su-tv}
\right)$, which actually factorizes as

$$
\begin{array}{cl}
&\Res_{(s,v)}\left(
h(s,v)h(t,u)\frac{\tilde{A}(s,v)\tilde{C}(t,u)-\tilde{A}(t,u)\tilde{C}(s,v)}{su-tv},
q(s,v)q(t,u)\frac{\tilde{B}(s,v)\tilde{D}(t,u)-\tilde{B}(t,u)\tilde{D}(s,v)}{su-tv}
\right)\\ \\
=&\lambda_0\,h(t,u)^{n-1}q(t,u)^{n-1}\Res_{(s,v)}\left(h(s,v),\frac{\tilde{B}(s,v)\tilde{D}(t,u)-\tilde{B}(t,u)\tilde{D}(s,v)}{su-tv}\right)\times \\
 &  \hspace{3cm} \Res_{(s,v)}\left(\frac{\tilde{A}(s,v)\tilde{C}(t,u)-\tilde{A}(t,u)\tilde{C}(s,v)}{su-tv},q(s,v)\right)  \tilde{\Delta}_{\tilde{A},\tilde{C},\tilde{B},\tilde{D}}(t,u),
\end{array}
$$
with $\lambda_0:=\Res_{(s,v)}\big(h(s,v),q(s,v)\big)\neq0.$
As
$\frac{\tilde{B}(s,v)h(t,u)\delta(t,u)-\tilde{B}(t,u)h(s,v)\delta(s,v)}{su-tv}$ can be written as $$\delta(t,u)\tilde{B}(s,v)\frac{h(t,u)-h(s,v)}{su-tv}+h(s,v)\frac{\tilde{B}(s,v)\delta(t,u)-\tilde{B}(t,u)\delta(s,v)}{su-tv},
$$
and using the fact that  
$D(t,u)=h(t,u)\delta(t,v)
$ we get that 
{\scriptsize $$\Res_{(s,v)}\left(h(s,v),\frac{\tilde{B}(s,v)\tilde{D}(t,u)-\tilde{B}(t,u)\tilde{D}(s,v)}{su-tv}\right)=\Res_{(s,v)}\left(h(s,v),\delta(t,u)\tilde{B}(s,v)\frac{h(t,u)-h(s,v)}{su-tv}\right).$$}
Note that $(\frac{\tilde{A}h}{\tilde{C}h},\frac{\tilde{B}q}{\tilde{D}q})$ is the minimal expression that makes the denominators $\tilde{C}h=\tilde{D}q$,
hence $h$ must be coprime with $\tilde{B}$ otherwise the second fraction would simplify. So, we have
$\Res_{(s,v)}(h(s,v),\tilde{B}(s,v))=\lambda^*\neq0$, and then 
{\scriptsize $$\Res_{(s,v)}\left(h(s,v),\frac{\tilde{B}(s,v)\tilde{D}(t,u)-\tilde{B}(t,u)\tilde{D}(s,v)}{su-tv}\right)=\lambda^*\delta(t,u)^{\deg(h)}\Res_{(s,v)}\big(h(s,v),\frac{h(t,u)-h(s,v)}{su-tv}\big).
$$}
By using the Poisson formula for the resultant, we have that -up to a nonzero constant-
$$\Res_{(s,v)}\left(h(s,v),\frac{h(t,u)-h(s,v)}{su-tv}\right)=\prod_{h(\xi_0:\xi_1)=0}\frac{h(t,u)}{\xi_0u-\xi_1t}={h(t,u)}^{\deg(h)-1}.
$$
So,  we get
$$\Res_{(s,v)}\left(h(s,v),\frac{\tilde{B}(s,v)\tilde{D}(t,u)-\tilde{B}(t,u)\tilde{D}(s,v)}{su-tv}\right)=\lambda_1\,\delta(t,u)^{\deg(h)}h(t,u)^{\deg(h)-1},
$$
with $\lambda_1\in\CC_{\neq0}$.
\par\smallskip
The computation of $\Res_{(s,v)}\left(\frac{\tilde{A}(s,v)\tilde{C}(t,u)-\tilde{A}(t,u)\tilde{C}(s,v)}{su-tv},q(s,v)\right)$ follows the same line: one has that -up to a nonzero constant- $\tilde{C}(t,u)=q(t,u)\delta(t,u)$ and then
$$\Res_{(s,v)}\left(\frac{\tilde{A}(s,v)\tilde{C}(t,u)-\tilde{A}(t,u)\tilde{C}(s,v)}{su-tv},q(s,v)\right)=\lambda_2\,{\delta(t,u)}^{\deg(q)}{q(t,u)^{\deg(q)-1}},
$$
for $\lambda_2\neq0$. Collecting all this information, we get
$$
\tilde{\Delta}_{\tilde{a},\tilde{b},\tilde{c}}(t,u)=
\lambda\ {\tilde{c}(t,u)}^{n-\deg(\delta)}{h(t,u)}^{n-\deg(q)-2}{q(t,u)}^{n-\deg(h)-2} \tilde{\Delta}_{\tilde{A},\tilde{C},\tilde{B},\tilde{D}}(t,u)
$$
with $\lambda\neq0$.
And now we use (\ref{eq:ddd}) to get that -up to a constant-
{\small $$\begin{array}{ccl}
{h(t,u)}^{n-\deg(q)-2}{q(t,u)}^{n-\deg(h)-2} \tilde{\Delta}_{\tilde{A},\tilde{C},\tilde{B},\tilde{D}}(t,u)&=&\tilde{c}(t,u)^{\deg(\delta)-1}\Delta_{\nu}(t,u)\\
&=&\big(h(t,u)q(t,u)\delta(t,u)\big)^{\deg(\delta)-1}\Delta_{\nu}(t,u).
\end{array}$$}
From here, we deduce
$$ {h(t,u)}^{\deg(h)-1}{q(t,u)}^{\deg(q)-1}\tilde{\Delta}_{\tilde{A},\tilde{C},\tilde{B},\tilde{D}}(t,u)={\delta(t,u)}^{\deg(\delta)-1}\Delta_{\nu}(t,u),
$$
which is the claim we wanted to prove.
\end{proof}


\end{document}